\documentclass[12pt]{article}

\usepackage{graphicx} 
\usepackage{amsmath} 
\usepackage{amssymb,amsthm}
\usepackage{color}
\usepackage{tikz}
\usepackage{pgfplots}
\usepackage{enumitem}

\newcommand{\R}{{\mathbb R}}
\newcommand{\C}{{\mathbb C}}
\newcommand{\Res}{\mathrm{Res}}

\newcommand{\cF}{\mathcal{F}}
\newcommand{\cX}{\mathcal{X}}
\newcommand{\cZ}{\mathcal{Z}}

\newcommand{\tLambda}{\widetilde{\Lambda}}
\newcommand{\tR}{\widetilde{R}}
\newcommand{\tf}{\widetilde{f}}

\newcommand{\remo}[1]{\textcolor{gray}{#1}}

\let\epsilon\varepsilon
\let\theta\vartheta

\newtheorem{theorem}{Theorem}[section]\newtheorem{lemma}[theorem]{Lemma}
\newtheorem{definition}[theorem]{Definition}

\newtheorem{remark}[theorem]{Remark}

\title{A Ginzburg-Landau approximation theorem for quasilinear pattern-forming systems \\ in uniformly local Sobolev spaces}
\author{Theo Belin$^1$, Guido Schneider$^2$ \\
{\small
$^1$
Centre for Mathematical Sciences, Lund University,} \\ {\small  S\"olvegatan 18A,
223 62 Lund,
Sweden} \\
{\small
$^2$
Institut f\"ur Analysis, Dynamik und Modellierung, Universit\"at Stuttgart, } \\ {\small  Pfaffenwaldring 57,  70569 Stuttgart, Germany}}

\begin{document}

\maketitle

\begin{abstract}
We prove that the Ginzburg-Landau equation correctly predicts the dynamics of quasilinear 
or fully nonlinear
pattern-forming systems close to the 
first instability.
We present an approximation theory in uniform local Sobolev spaces 
which is applied to a quasilinear
Swift-Hohenberg model, to a quasilinear version of B\'enard's problem with velocity dependent 
viscosity, and to abstract quasilinear reaction-diffusion-advection systems.
\end{abstract}

\textbf{MSC codes:} 35K57, 35B40, 35B32, 35B36, 35Q56

\textbf{Keywords:} 
amplitude approximation,
    Pattern formation,
    Reaction-diffusion system,
    Quasilinear parabolic equation


%

%
%
%
%
%
%
%
%
%

\section{Introduction}

In the following we consider quasilinear 
pattern-forming systems such as a quasilinear
Swift-Hohenberg model, a quasilinear version of B\'enard's problem, 
or abstract quasilinear reaction-diffusion-advection systems. 

We are interested in describing these systems near the first instability 
in a parameter regime where a Turing instability occurs.
The real Ginzburg-Landau equation 
$$ 
\partial_T A =  \nu_2 \partial_X^2 A + \nu_1 A + \nu_3 A |A|^2 ,
$$
with coefficients $ \nu_2 > 0 $, $ \nu_1,\nu_3 \in \R $, time variable 
$ T \geq 0 $, space variable $ X \in \R $, and solution $ A(X,T) \in \C $,
is a universal amplitude equation that can be derived by a multiple scaling perturbation ansatz and can be used to approximately describe such a situation.
Error estimates were established in several papers, starting with \cite{CE90,vH91,KSM92,Schn94a,Schn94ZAMP}, which show that the Ginzburg-Landau equation makes correct predictions about the dynamics of the original pattern-forming system.
Among other things,
these error estimates have been used to prove the global existence of solutions starting in a small neighborhood of the weakly unstable origin, cf. \cite{MS95,Schn99JMPA}.
The textbook \cite{SU17book} provides an overview and introduction to the theory.

However, for quasilinear systems only two 
approximation results are known to us. In \cite{Zi14PhD}  
the Ginzburg-Landau approximation 
was justified for the Marangoni problem in Sobolev spaces $ H^r $.
In \cite{BS26} an easy-to-use approximation result for quasilinear pattern-forming reaction-diffusion-advection systems was presented which was applied to the Gray-Scott-Klausmeier system.

Since in general the bifurcating solutions do not vanish
for $ |x| \to \infty $, it is essential to prove such approximation
results also in function spaces, such as 
$H^r_{l,u}$,  $ C^r_{b,unif}$, or $ C^{r,\alpha} $.
Such approximation
results
are well-established for 
semilinear problems but for quasilinear systems such a result was not 
proven so far.
Therefore, it is the purpose of this paper to prove such an approximation result in the quasilinear case, too. 

The examples which we choose to present our approach are adaptions 
of well-studied semilinear examples such that we can refer to  existing 
literature for the derivation of the Ginzburg-Landau equation and for the 
estimates of the residual terms. This allows us to concentrate on the handling 
of the quasilinear feature.

The plan of the paper is as follows. In Section \ref{sec2} we 
introduce a quasilinear Swift-Hohenberg model for which we explain the 
underlying ideas. 
The main technical part is the optimal regularity estimates for the solutions
of the inhomogeneous linearized problem which are carried out 
in Section \ref{sec2b} and which are based on $C^{\alpha}$-theory in time.
In Section \ref{sec3} we improve these ideas to handle a
quasilinear version of B\'enard's problem. 
Finally, in Section \ref{sec4}
we present an abstract Ginzburg-Landau approximation result for  
quasilinear pattern-forming reaction-diffusion-advection systems.
The paper is closed with a discussion and outlook section.
\medskip

{\bf Acknowledgement:} This work  was partially supported by the 
DFG Network program 'Instability Phenomena in Asymptotic Models in Fluid Dynamics'
which is funded by the Germany Research Foundation (DFG) project number 545145736.

\section{The quasilinear Swift-Hohenberg model}
\label{sec2}

The Swift-Hohenberg equation 
$$
\partial_t u = - (1+\partial_x^2)^2 u + \alpha_{SH} u - u^3 
$$
with $ t \geq 0 $, $ x \in \R $, $ u(x,t) \in \R $, parameter $ \alpha_{SH} \in \R $
was used as a prototype model in the justification analysis of the 
Ginzburg-Landau approximation \cite{CE90,KSM92}.
Here, we are interested in the following fully nonlinear version 
\begin{equation} \label{qSH}
\partial_t u = - (1+\partial_x^2)^2 u + \alpha_{SH} u  + 
g(u, \ldots,\partial_x^4 u) ,
\end{equation}
with a smooth function $ g:\R^5 \to \R $ without constant and linear terms.
%
%
For the following explanations and our purposes it is sufficient to consider
$$ 
g(u, \ldots,\partial_x^4)  =  - \partial_x^4 (u^3).
$$ 
The linearization around the trivial solution $ u = 0 $ is solved by 
$ u(x,t) = e^{ikx + \lambda(k,\alpha) t} $ where 
$$ 
\lambda(k,\alpha_{SH}) = -(1-k^2)^2 + \alpha_{SH}.
$$ 
Hence, $ u = 0 $ becomes unstable for $ \alpha_{SH} = 0 $ at the 
wave numbers $ k = \pm 1 $. 
For $ \alpha_{SH} > 0 $ we introduce the small perturbation parameter $ 0 < \varepsilon^2 \ll 1 $
by $ \alpha_{SH} = \varepsilon^2 $
to describe the bifurcating solutions. For the derivation of the Ginzburg-Landau equation 
we make the ansatz 
\begin{equation} \label{ansatz}
u(x,t) = \varepsilon \Psi_{GL}(x,t) = \varepsilon A(\varepsilon x,\varepsilon^2 t) e^{ix} + c.c.,
\end{equation}
with slow spatial scale $ X = \varepsilon x $, slow time scale 
$ \varepsilon^2 t $, and amplitude $ A(X,T) \in \C $, modulating the 
weakly unstable modes $ e^{\pm i x} $.
Inserting this ansatz in \eqref{qSH} and equating the coefficients in front 
of the $ \varepsilon^{j_1} e^{i j_2 x} $ to zero gives an equation 
for the amplitude function $ A $.
The coefficients in front of $ \varepsilon e^{ix} $ and $ \varepsilon^2 e^{ix} $
vanish identically. At $ \varepsilon^3 e^{ix} $ we find the 
Ginzburg-Landau equation 
\begin{equation} \label{GL}
\partial_T A = 4 \partial_X^2 A + A - 3 A |A|^2 .
\end{equation}
In this paper we are interested in estimating the error made by the
approximation $ \varepsilon \Psi_{GL} $. 
For the reasons explained in the introduction
we will do so in $ H^r_{l,u} $-spaces 
equipped with the norm
$$ 
\| u \|_{H^r_{l,u}} = \sup_{y \in \R} \| u \|_{H^r(y, y+1)}  = \sup_{y \in \R}
\left(\sum_{j= 0}^r \int_y^{y+1} |\partial_x^j u(x)|^2 dx\right)^{1/2}.
$$ 
For a precise definition of $ H^r_{l,u} $-spaces we refer to \cite[\S 8.3.1]{SU17book}.
For the quasilinear Swift-Hohenberg model \eqref{qSH} we prove:
\begin{theorem}
Let $ A \in C([0,T_0],H^{r_0+2}_{l,u}) $ be a solution of the 
Ginzburg-Landau equation \eqref{GL} satisfying 
\begin{equation} \label{eq19s2}
\sup_{T \in [0,T_0]} \| A(\cdot,T) \|_{H^{r_0+2}_{l,u}}
\leq C_{GL}.
\end{equation}
Then there exist $ \varepsilon_0 > 0 $ 
and $ C_2 > 0 $, only depending on 
$ C_{GL} > 0 $, $ T_0 > 0 $, $ r_0 \geq 5 $,  such that  for all
$ \varepsilon \in (0,\varepsilon_0) $ there are solutions 
$ u $ of the quasilinear Swift-Hohenberg model \eqref{qSH}
with
$$ 
\sup_{t \in [0,T_0/\varepsilon^2]} \| u(\cdot,t) - \varepsilon \Psi_{GL}(\cdot,t) \|_{H^{r_0}_{l,u}} \leq C_2 \varepsilon^{3/2}.
$$
\end{theorem}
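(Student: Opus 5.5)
We follow the standard scheme of Ginzburg--Landau justification, with the semilinear step replaced by maximal-regularity estimates for the linearized quasilinear operator. First we improve the approximation. We add higher-order terms in the modes $e^{0ix}$, $e^{\pm 2ix}$, $e^{\pm 3ix}$ and $\varepsilon^2$-corrections to the $e^{\pm ix}$ amplitude, obtaining $\varepsilon\Psi$ with $\|\varepsilon\Psi-\varepsilon\Psi_{GL}\|_{H^{r_0}_{l,u}}=O(\varepsilon^2)$. The residual
$$\mathrm{Res}(\varepsilon\Psi)=-\partial_t(\varepsilon\Psi)-(1+\partial_x^2)^2\varepsilon\Psi+\varepsilon^2\varepsilon\Psi-\partial_x^4((\varepsilon\Psi)^3)$$
then satisfies $\sup_{t\le T_0/\varepsilon^2}\|\mathrm{Res}(\varepsilon\Psi)\|_{H^{r_0}_{l,u}}\le C\varepsilon^{7/2}$ (in fact $O(\varepsilon^4)$ pointwise in $H^{r_0}_{l,u}$). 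Since $\partial_x^4(u^3)$ is a polynomial nonlinearity, the residual computation is the same as in the semilinear literature. The regularity $A\in H^{r_0+2}_{l,u}$ is exactly what is consumed by the $\partial_X^2$ and $\partial_T$ derivatives falling on $A$. We also need the frequency-space cut-off of the modulation terms, as in \cite{SU17book}, so that $\Psi$ is analytic in $x$ without losing $\varepsilon$-powers.

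Next we write $u=\varepsilon\Psi+\varepsilon^{3/2}R$ and derive the error equation
$$\partial_t R=-(1+\partial_x^2)^2R+\varepsilon^2R-\partial_x^4\big(3\varepsilon^2\Psi^2R+3\varepsilon^{5/2}\Psi R^2+\varepsilon^{3}R^3\big)+\varepsilon^{-3/2}\mathrm{Res}(\varepsilon\Psi).$$
The difficulty is that the quadratic-in-$\Psi$ term $\partial_x^4(\Psi^2R)$ loses four derivatives and is only of size $\varepsilon^2$. A naive Gronwall argument therefore gives growth $e^{C\varepsilon^2 t}$, which is fine on $[0,T_0/\varepsilon^2]$ for the size but not for regularity, because the semigroup $e^{-(1+\partial_x^2)^2t}$ only regains four derivatives in the maximal-regularity sense, not in an $L^1$-in-time sense. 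We therefore do not treat this term perturbatively in the Duhamel formula. Instead we treat the linear operator
$$L_\varepsilon(t)R=-(1+\partial_x^2)^2R+\varepsilon^2R-3\varepsilon^2\partial_x^4(\Psi^2R)$$
as a small-coefficient perturbation of the principal part. Its leading symbol $-(1+3\varepsilon^2\Psi^2)\partial_x^4$ is uniformly parabolic.

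The core step is an optimal-regularity estimate for the inhomogeneous linear problem $\partial_tR=L_\varepsilon(t)R+f$ in $C^\alpha$-in-time spaces with values in $H^{r_0-4}_{l,u}\to H^{r_0}_{l,u}$, in the style of Da Prato--Grisvard/Lunardi. This choice is made because maximal $L^p$-regularity is awkward in $H_{l,u}$. Two features must be controlled. The constants must be uniform over the long interval $[0,T_0/\varepsilon^2]$. And the time-Hölder seminorm of the coefficients, $\varepsilon^2\Psi^2$, must be small, which holds since $\partial_t\Psi=O(\varepsilon^2)$. We would freeze time, use the analytic semigroup estimates for $-(1+\partial_x^2)^2+\varepsilon^2$ on $H^r_{l,u}$ (growth at most $e^{\varepsilon^2 t}$), absorb the $\varepsilon^2$-small principal perturbation by a Neumann series in the maximal-regularity space, and localize with uniformly-local weights $\rho(x-y)$ to pass from $H^r$ to $H^r_{l,u}$. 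We expect this step to be the main obstacle. The estimate should yield
$$\|R\|_{C^\alpha([0,t],H^{r_0}_{l,u})}\le Ce^{C\varepsilon^2 t}\big(\|R(0)\|+\|f\|_{C^\alpha([0,t],H^{r_0-4}_{l,u})}\big).$$

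Finally we close with a fixed-point and continuation argument. For $\|R\|\le M$ in the above norm, the nonlinear terms $\varepsilon^{5/2}\partial_x^4(\Psi R^2)+\varepsilon^3\partial_x^4(R^3)$ lie in $C^\alpha H^{r_0-4}_{l,u}$ with bound $C\varepsilon^{5/2}M^2$, using that $H^{r_0}_{l,u}$ is an algebra. The residual contributes $C\varepsilon^2$. A contraction (or the local quasilinear existence theory together with continuation) on intervals of length $O(1)$ then gives
$$\|R\|\le Ce^{CT_0}(1+\varepsilon^{1/2}M^2)\le M$$
on $[0,T_0/\varepsilon^2]$ for $M=2Ce^{CT_0}$ and $\varepsilon$ small. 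This requires chaining the $C^\alpha$ estimates over $O(\varepsilon^{-2})$ unit intervals while keeping the exponential growth rate $O(\varepsilon^2)$, for instance via a Gronwall argument on the sequence of interval endpoints. Combining this with $\|\varepsilon\Psi-\varepsilon\Psi_{GL}\|=O(\varepsilon^2)$ yields the stated $C_2\varepsilon^{3/2}$ bound, with $C_2$ depending only on $C_{GL},T_0,r_0$.
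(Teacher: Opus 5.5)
You have a genuine gap, and it sits where you place the main obstacle: nothing in the proposal gives $\varepsilon$-uniform control on the long interval $[0,T_0/\varepsilon^2]$.

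First, the linear estimate you aim for, $\|R\|_{C^\alpha([0,t],H^{r_0}_{l,u})}\le Ce^{C\varepsilon^2 t}(\|R(0)\|+\|f\|_{C^\alpha([0,t],H^{r_0-4}_{l,u})})$ with $C$ independent of $\varepsilon$, is false for $t\sim\varepsilon^{-2}$. The critical modes $k=\pm1$ are only neutrally stable. For $R(0)=0$ and $f=e^{ix}+c.c.$ constant in time, the constant-coefficient problem gives $R(t)=\varepsilon^{-2}(e^{\varepsilon^2t}-1)(e^{ix}+c.c.)$, which is of size $\varepsilon^{-2}$ at $t=T_0/\varepsilon^2$. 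The $\mathcal{O}(\varepsilon^2)$ term $-3\varepsilon^2\partial_x^4(\Psi^2R)$ does not change this order. Your closing bound $Ce^{CT_0}(1+\varepsilon^{1/2}M^2)$, for a forcing of size $\varepsilon^2$, silently uses this factor $\varepsilon^{-2}$. So everything rests on chaining unit-interval estimates over $\mathcal{O}(\varepsilon^{-2})$ steps, and that does not work as described.

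A discrete Gronwall argument needs a one-step bound $\|R(n+1)\|\le(1+C\varepsilon^2)\|R(n)\|+C\|f\|$. It must hold in the norm in which the $C^\alpha$ estimate can be restarted, i.e.\ one involving $\|R(n)\|_{D(\Lambda)}$ and the compatibility quantity $\|\Lambda R(n)+f(n)\|_{D_\Lambda(\alpha,\infty)}$. Optimal-regularity estimates only give an $\varepsilon$-independent constant $C>1$. Already in $L^2_{l,u}$ the semigroup is not a contraction: for large $t$, $e^{t\Lambda}$ maps the square wave $\mathrm{sgn}(\cos x)$, of norm $1$, to approximately $\frac{4}{\pi}\cos x$, whose $L^2_{l,u}$-norm is about $1.22$. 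Hence $\|e^{\Lambda}\|_{L^2_{l,u}\to L^2_{l,u}}\ge 1+\delta_0$ with $\delta_0>0$ independent of $\varepsilon$, and iterating produces $(1+\delta_0)^{T_0/\varepsilon^2}$. A Neumann series over the whole interval fails for the same reason. On the critical modes the solution operator has norm of order $\varepsilon^{-2}$ and the perturbation has size $\varepsilon^2$, so their product is $\mathcal{O}(1)$, not small.

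The missing idea, which is how the paper proceeds, is to avoid chaining and estimate on the whole interval at once. One separates critical and stable modes by the Fourier mode filters $E_{c,\pm1}$ and $E_\sigma=I-E_{c,1}-E_{c,-1}$, and works in $C^\alpha$-in-time spaces with exponential weight $e^{-\tilde\eta\varepsilon^2t}$.

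On $E_\sigma$ the semigroup is damped at an $\varepsilon$-independent rate $\beta_->0$. So the Lunardi-type estimate holds on $[0,T_0/\varepsilon^2]$ with $\mathcal{O}(1)$ constants; it includes the compatibility term $\|\Lambda R_0+f|_{t=0}\|_{D_\Lambda(\alpha,\infty)}$, which your estimate also omits. On $E_{c,\pm1}$ the compact Fourier support makes everything smooth, so there is no derivative loss. The variation-of-constants formula with $\|e^{t\Lambda}\|\le Ce^{h\varepsilon^2t}$ then gives a factor $(\tilde\eta-h)^{-1}\varepsilon^{-2}$.

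Consequently the quasilinear term $-3\varepsilon^2\partial_x^4(\Psi^2R)$ need not be built into a time-dependent operator $L_\varepsilon(t)$ at all. It is simply part of the forcing, with $\cZ^0$-norm at most $C\varepsilon^2\|R\|_{\cZ^1}$, and its $\varepsilon^2$ cancels the $\varepsilon^{-2}$. Taking $\tilde\eta$ large makes the resulting factor $C(\tilde\eta-h)^{-1}$ small, which closes the contraction.

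To rescue your route you would need an argument of comparable weight. One option is an adapted norm in which the one-step evolution of the perturbed problem is $(1+\mathcal{O}(\varepsilon^2))$-bounded in all norms required for restarting; the proposal does not supply one.
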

\begin{remark}{\rm
Sobolev's embedding theorem $ H^{r_0}_{l,u}(\R,\R) \subset C^0_{b,unif}(\R,\R)  $ for $ r_0 \geq 1 $ gives 
the estimate
$$ 
\sup_{t \in [0,T_0/\varepsilon^2]} 
\sup_{x \in \R}
| u(x,t) -  \varepsilon \Psi_{GL}(x,t) | \leq C_2 \varepsilon^{3/2}.
$$}
\end{remark}
\noindent
{\bf Proof.}
In order to use the subsequent proofs also in more general situations we 
introduce the following notation.
We set 
$$ 
\cX = H^{r}_{l,u} \qquad \textrm{and} \qquad D(\Lambda) = H^{r+4}_{l,u} ,
$$ 
where $ r_0= r+ 4 $.
In a first step we have to estimate the 
residual $ \Res(\varepsilon \Psi_{GL}) $, with
$$ 
\Res(u) = - \partial_t u  - (1+\partial_x^2)^2 u + \varepsilon^2  u  - \partial_x^4 (u^3),
$$ 
i.e., the terms which do not cancel after inserting the approximation $ \varepsilon \Psi_{GL} $
into the quasilinear Swift-Hohenberg model \eqref{qSH}.
For estimating the error we need the residual $ \Res(\varepsilon \Psi_{GL}) $
to be sufficiently small, namely $  \mathcal{O}(\varepsilon^{7/2})$ in $ \cX $.
To achieve this bound  we follow the existing literature \cite{KSM92} and use the improved approximation
$$
\varepsilon \Psi = \varepsilon \Psi_{GL} + \varepsilon^3 A_3(\varepsilon x,\varepsilon^2 t) e^{3 ix} + c.c.,
$$
where $ A_3 $ satisfies 
$$ 
0 = - 64 A_3 - 81 A^3 .
$$
For the residual of the improved approximation we find
\begin{lemma} \label{lem51}
Let $ A \in C([0,T_0],H^{r+4}_{l,u}) $ be a solution of the 
Ginzburg-Landau equation \eqref{GL} satisfying \eqref{eq19s2}.
Then there exist $ \varepsilon_0 > 0 $ 
and $ C_{res} > 0 $, only depending on $ C_{GL} > 0 $, $ T_0 > 0 $, and $ r \geq 1 $, such that   for all $ \varepsilon \in (0,\varepsilon_0)
$ we have 
$$
\sup_{t \in [0,T_0/\varepsilon^2]} \| \Res(\varepsilon \Psi) \|_{\cX} \leq C_{res} \varepsilon^{7/2} ,
$$ 
where $ \cX = H^{r}_{l,u} $.
\end{lemma}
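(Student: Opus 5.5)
We insert $\varepsilon\Psi = \varepsilon A(X,T)e^{ix} + \varepsilon^3 A_3(X,T)e^{3ix} + c.c.$, with $X=\varepsilon x$ and $T=\varepsilon^2 t$, into $\Res$. We then expand every term by the chain rule, $\partial_x(B(\varepsilon x)e^{ijx}) = (ij B + \varepsilon \partial_X B)e^{ijx}$ and $\partial_t = \varepsilon^2\partial_T$, and sort the result by powers $\varepsilon^{m}$ and harmonics $e^{ijx}$.

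\emph{Linear part.} The symbol expansion $-(1-(j+\varepsilon D)^2)^2$ with $D=-i\partial_X$ at $j=1$ gives $0\cdot\varepsilon A$ and no $\varepsilon^2$ term, since $k=1$ is a double root. The $\varepsilon^3$ term is $4\partial_X^2 A$, and the remaining terms are $\varepsilon^4\partial_X^3 A$ and $\varepsilon^5\partial_X^4 A$. At $j=3$ the leading term is $-64\varepsilon^3A_3$, and the remaining terms are $\varepsilon^4\partial_X A_3,\dots,\varepsilon^7\partial_X^4A_3$. The time derivative contributes $-\varepsilon^3\partial_TA$ and $-\varepsilon^5\partial_TA_3$, and $\varepsilon^2u$ contributes $\varepsilon^3A + \varepsilon^5A_3$.

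\emph{Nonlinear part.} $(\varepsilon\Psi)^3 = \varepsilon^3(3|A|^2Ae^{ix} + A^3e^{3ix} + c.c.) + \mathcal O(\varepsilon^5)$, where the $\mathcal O(\varepsilon^5)$ terms are products involving $A_3$. Applying $-\partial_x^4$ gives $-3|A|^2A$ on $e^{ix}$ and $-81A^3$ on $e^{3ix}$ at order $\varepsilon^3$, plus $\varepsilon^4$ corrections that carry $X$-derivatives of $A^3$ and $|A|^2A$. At order $\varepsilon^3$, the $e^{ix}$ coefficient vanishes by \eqref{GL} and the $e^{3ix}$ coefficient vanishes by the choice $A_3=-\tfrac{81}{64}A^3$. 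The $\varepsilon^3 e^{\pm 5ix}$ terms and the $\varepsilon^5$ terms are not cancelled.

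\emph{Estimates.} Every remaining term is then formally $\mathcal O(\varepsilon^4)$ or smaller. Each has the form $\varepsilon^m F(\varepsilon x)e^{ijx}$, where $F$ is a polynomial in $A$, $\bar A$, $A_3$ and their $X$-derivatives of order at most $4$, plus $\partial_TA_3$. There is one apparent exception: the $\varepsilon^3 e^{\pm5ix}$ terms. These come from $A^2A_3$-type products, which actually appear only at order $\varepsilon^5$. A careful count shows that $e^{5ix}$ arises only through $A_3$, so it is $\mathcal O(\varepsilon^5)$. The harmonic $e^{9ix}$ from $A_3^3$ carries $\varepsilon^9$. To handle $\partial_TA_3 = -\tfrac{243}{64}A^2\partial_TA$, we substitute the right-hand side of \eqref{GL}. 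This costs two $X$-derivatives, which is why $A\in H^{r+4}_{l,u}$ is assumed.

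The key scaling lemma, as in \cite{SU17book}, is
$$\|F(\varepsilon\cdot)e^{ij\cdot}\|_{H^r_{l,u}} \le C\varepsilon^{-1/2}\|F\|_{H^{r}_{l,u}}.$$
It holds because an $x$-interval of length $1$ corresponds to an $X$-interval of length $\varepsilon$. The $L^2$ norm over it therefore scales like $\varepsilon^{1/2}\sup|F|$, and in fact one even obtains the better bound $C\sup|F|$ via Sobolev embedding. We use the sup-norm route: $\|F(\varepsilon\cdot)e^{ij\cdot}\|_{H^r_{l,u}}\le C\sum_{l\le r}\|\partial_X^lF\|_{C^0_b}$. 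This requires $F\in C^r_b$, i.e.\ that $A$ has $r+5$ derivatives in $H_{l,u}$ via $H^1_{l,u}\subset C^0_b$.

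The main obstacle is the regularity budget. Its worst case is $\varepsilon^4\partial_X^3A$ together with $\varepsilon^4$-terms containing four $X$-derivatives of cubic expressions, which need up to $r+4$ derivatives of $A$ in $H_{l,u}$. I would therefore use the $\varepsilon^{-1/2}$ version of the scaling lemma, which needs only $\|\partial_X^lF\|_{L^2_{l,u}}$ with $l\le r$. Products of $A$ are then controlled by the algebra property of $H^s_{l,u}$ for $s\ge1$ together with \eqref{eq19s2}. This yields $\varepsilon^{4}\cdot\varepsilon^{-1/2}=\varepsilon^{7/2}$ for the dominant terms, and smaller bounds for the rest, uniformly for $T\in[0,T_0]$, i.e.\ $t\in[0,T_0/\varepsilon^2]$. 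This explains the $7/2$ in the statement. Finally, $C_{res}$ depends only on $C_{GL}$, $r$, and $T_0$, through the supremum of $\|A\|_{H^{r+4}_{l,u}}$.
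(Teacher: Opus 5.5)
Your proposal is correct and follows the paper's argument: the paper simply defers to the standard residual computation of \cite[Section 10.2]{SU17book}, and that is exactly what you carry out. You expand in $\varepsilon^m e^{ijx}$, cancel the $\varepsilon^3 e^{\pm ix}$ and $\varepsilon^3 e^{\pm 3ix}$ coefficients via \eqref{GL} and $-64A_3-81A^3=0$, and bound the remaining formally $\mathcal{O}(\varepsilon^4)$ terms using the $\varepsilon^{-1/2}$ scaling loss in $H^r_{l,u}$, which gives $\varepsilon^{7/2}$. Two small inaccuracies do not affect the conclusion: there are no $\varepsilon^3 e^{\pm 5ix}$ terms at all (as you yourself conclude), and the terms with four $X$-derivatives of cubic expressions carry $\varepsilon^7$ rather than $\varepsilon^4$, so the $H^{r+4}_{l,u}$ requirement actually comes from $\varepsilon^5\partial_X^4 A$ and $\varepsilon^7\partial_X^4(|A|^2A)$.
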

%
%
\noindent
{\bf Proof.}
The proof follows almost line for line the one given in \cite[Section 10.2]{SU17book} for 
the classical Swift-Hohenberg equation.
\qed 
\medskip

For estimating the error made by this formal approximation we write the solution $ u $ as a sum of the improved approximation $ \varepsilon \Psi $ 
and an error $  \varepsilon^{\beta} R $ with $ \beta > 1 $ chosen below.
The error satisfies 
\begin{equation} \label{linearSH}
\partial_t R = \Lambda R + f = - (1+\partial_x^2)^2 R +  \varepsilon^2 R+ f ,
\end{equation}
with 
\begin{eqnarray*} 
f &= &   - 3 \varepsilon^2  \partial_x^4(\Psi^2 R) -  3 \varepsilon^{1+\beta} \partial_x^4(\Psi R^2)
- \varepsilon^{2 \beta}   \partial_x^4 (R^3)  + \varepsilon^{-\beta} \Res(\varepsilon \Psi).
\end{eqnarray*}
There exist $ C_1 $, $ C_2 $, and $ C_3 $ such that for all $ \varepsilon \in (0,1) $ we have 
\begin{equation} \label{festi}
\| f \|_{\cX} \leq C_1 \varepsilon^2 \| R \|_{D(\Lambda)}
+ C_2 \varepsilon^{1+\beta}  \| R \|_{D(\Lambda)}^2 + C_3 \varepsilon^{2 \beta}  
\| R \|_{D(\Lambda)}^3 + C_{res} \varepsilon^{7/2-\beta}.
\end{equation}
Since we have to prove estimates on the long $ [0,T_0/\varepsilon^2] $-time scale 
all terms of $ f $ need at least an $ \varepsilon^2 $ in front. Therefore, we set $ \beta = 3/2 $ in the following.

To handle the quasilinear character of the equations we introduce two families of spaces.
For $\alpha \in (0, 1)$ define
$$
X^{r,\alpha}_{\eta,t_0} =  C^{\alpha}([0,t_0],\cX)
$$
equipped with the norm
$$
\| R \|_{X^{r,\alpha}_{\eta,t_0}} = \sup_{t \in [0,t_0]} \| e^{- \eta t} R(t) \|_{C^{\alpha}([0,t_0],\cX)}
$$
and 
$$
Y^{r+4,\alpha}_{\eta,t_0} = C^{\alpha}([0,t_0],D(\Lambda)) \cap C^{1+\alpha}([0,t_0],\cX)
$$
equipped with the norm
$$
\| R \|_{Y^{r+4,\alpha}_{\eta,t_0}} = \sup_{t \in [0,t_0]} \| e^{- \eta t} R(t) \|_{C^{\alpha}([0,t_0],D(\Lambda))} + \sup_{t \in [0,t_0]} \| e^{- \eta t} R(t) \|_{C^{1,\alpha}([0,t_0],\cX)}.
$$
Since for fixed $ t_0 \in (0,\infty) $ these norms are equivalent 
to the unweighted norms, without the factor $e^{- \eta t} $,
the terms collected in $ f $ will form a smooth mapping from $  Y^{r+4,\alpha}_{\eta,t_0} $ 
to $ X^{r,\alpha}_{\eta,t_0} $. Using optimal regularity will show that the solutions 
of  \eqref{linearSH} define a mapping $ \mathcal{K}: f \mapsto R $ which maps 
 $ X^{r,\alpha}_{\eta,t_0} $ to $ Y^{r+4,\alpha}_{\eta,t_0} $.

Since we have to bound the solutions of the error equations on the long $ [0,T_0/\varepsilon^2] $-time interval the spaces $ X^{r,\alpha}_{\eta,t_0} $ and $ Y^{r+4,\alpha}_{\eta,t_0} $
are equipped with exponential weights in time, in the following we choose $ \eta = \widetilde{\eta} \varepsilon^2 $
with $ \widetilde{\eta} > 0 $ fixed, sufficiently large and independent of $ 0 < \varepsilon^2 \ll 1 $ and $t_0 = T_0/\varepsilon^2$. 
Again, to use the subsequent proofs also in more general situations we 
introduce the following notation.
We set 
$$ 
\cZ^0 = X^{r,\alpha}_{\widetilde{\eta} \varepsilon^2,T_0/\varepsilon^2}  \qquad \textrm{and} \qquad \cZ^1 = Y^{r+4,\alpha}_{\widetilde{\eta} \varepsilon^2,T_0/\varepsilon^2}  .
$$ 

The solution of the error equation \eqref{linearSH} can be constructed through a fixed point argument. The right-hand side of $ R = \mathcal{K} f(R) $ will be a contraction in $\cZ^1$ for $ \varepsilon > 0 $ sufficiently small and $ \widetilde{\eta} > 0 $ sufficiently large, independent of $ 0 < \varepsilon \ll 1 $.
 Since $ e^{\widetilde{\eta} \varepsilon^2 t} = \mathcal{O}(1) $ for 
 $ t \in [0,T_0/\varepsilon^2]$ all estimates transfer 
 one-to-one from $ H^{r+4}_{l,u} $-spaces to $ Y^{r+4,\alpha}_{\widetilde{\eta} \varepsilon^2,T_0/\varepsilon^2} $-spaces. In detail, 
using 
the subsequent Remark \ref{remmakre} and 
the fact that $ \varepsilon \Psi $ is uniformly bounded for
$ t \in [0,T_0/\varepsilon^2] $,
 \eqref{festi} transfers into:
 
 There exist $ C_1 $, $ C_2 $, and $ C_3 $ such that for all $ \varepsilon \in (0,1) $ we have 
\begin{eqnarray} \label{festi1}
\| f \|_{\cZ^0} & \leq & C_1 \varepsilon^2 \| R \|_{\cZ^1}
+ C_2 \varepsilon^{1+\beta} e^{\widetilde{\eta} T_0} \| R \|_{\cZ^1}^2 \\ && \qquad + C_3 \varepsilon^{2 \beta}  e^{2 \widetilde{\eta} T_0}
\| R \|_{\cZ^1}^3 + C_{res} \varepsilon^{7/2-\beta}. \nonumber
\end{eqnarray}
\begin{remark}\label{remmakre}{\rm
Since we need to estimate the residual in $ \cZ^0 =  C^{\alpha}([0, T_0/\epsilon^2], \cX)$ we need  more regularity in time compared to the  estimates for the residual in Lemma \ref{lem51}.
\begin{lemma} \label{lem51b}
Let  $ \alpha \in (0,1) $ and let $ A \in C([0,T_0],H^{r+4+2 \alpha}_{l,u}) $ 
be a solution of the 
Ginzburg-Landau equation \eqref{GL} satisfying \eqref{eq19s2}.
Then there exist $ \varepsilon_0 > 0 $ 
and $ C_{res} > 0 $, only depending on $ C_{GL} > 0 $, $ T_0 > 0 $, and $ r \geq 1 $, such that for all $ \varepsilon \in (0,\varepsilon_0)
$  we have 
$$
 \| \Res(\varepsilon \Psi) \|_{\cZ^0} \leq C_{res} \varepsilon^{7/2},
$$ 
where $ \cZ^0 = X^{r,\alpha}_{\widetilde{\eta} \varepsilon^2,T_0/\varepsilon^2} $. 
\end{lemma}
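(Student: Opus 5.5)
We need the sup norm $\| \Res(\varepsilon\Psi)\|_{\cX}$ on $[0,T_0/\varepsilon^2]$, which Lemma \ref{lem51} already gives, together with the $\alpha$-Hölder seminorm in time of $t\mapsto \Res(\varepsilon\Psi)(t)$ with values in $\cX=H^r_{l,u}$, carrying the weight $e^{-\widetilde\eta\varepsilon^2 t}$. The weight is harmless: $e^{-\widetilde\eta\varepsilon^2 t}\le 1$, and its Hölder seminorm in $t$ is bounded by $(\widetilde\eta\varepsilon^2)^\alpha\le 1$. So, up to constants, it suffices to bound the unweighted $C^\alpha([0,T_0/\varepsilon^2],\cX)$-norm of the residual by $C\varepsilon^{7/2}$.

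The structure of the residual, as in \cite[Section 10.2]{SU17book}, is a finite sum of terms of the form $\varepsilon^{m} G(\varepsilon x,\varepsilon^2 t)e^{ijx}$. Here $m\ge 4$ (or $m\ge 3$ with an extra spatial derivative $\partial_X$ that is counted separately), and $G$ is a polynomial expression in $A$, $\bar A$, $A_3$ and their $X$-derivatives. The time derivative $\partial_T A$ has been eliminated using \eqref{GL}. The key scaling fact is that $\|G(\varepsilon\cdot)e^{ij\cdot}\|_{H^r_{l,u}}\le C\varepsilon^{-1/2}\|G\|_{H^r_{l,u}}$ when $\varepsilon\le1$, which yields the $\varepsilon^{7/2}$ in Lemma \ref{lem51}.

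For the Hölder part I would interpolate between the sup bound and a Lipschitz-type bound. For $0\le s<t\le T_0/\varepsilon^2$, set $T=\varepsilon^2t$ and $S=\varepsilon^2 s$. Then
$$\|\Res(t)-\Res(s)\|_{\cX}\le C\varepsilon^{m-1/2}\|G(\cdot,T)-G(\cdot,S)\|_{H^{r}_{l,u}}.$$
If $|t-s|\ge1$, the sup bound already gives $2C\varepsilon^{7/2}\le 2C\varepsilon^{7/2}|t-s|^\alpha$. If $|t-s|<1$, it suffices to show that $T\mapsto G(\cdot,T)$ is $\alpha$-Hölder in $H^r_{l,u}$, with a constant depending only on $C_{GL}$. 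Then
$$\|G(T)-G(S)\|_{H^r_{l,u}}\le C|T-S|^\alpha=C\varepsilon^{2\alpha}|t-s|^\alpha\le C|t-s|^\alpha,$$
so the slow time scaling actually gains a factor $\varepsilon^{2\alpha}$.

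The main obstacle is this Hölder regularity in $T$ of $A$, and hence of $G$, in $H^{r+4}_{l,u}$. It is exactly the reason for the stronger assumption $A\in C([0,T_0],H^{r+4+2\alpha}_{l,u})$. I would use the parabolic structure of \eqref{GL}. Write $A(T)=e^{T\mathcal L}A(0)+\int_0^T e^{(T-\tau)\mathcal L}N(A)(\tau)\,d\tau$, with $\mathcal L=4\partial_X^2+1$ and $N(A)=-3A|A|^2$. Then use the standard semigroup estimate in $H^s_{l,u}$:
$$\|(e^{h\mathcal L}-I)v\|_{H^{s}_{l,u}}\le Ch^{\alpha}\|v\|_{H^{s+2\alpha}_{l,u}}.$$
This follows from analyticity of the heat semigroup on uniformly local spaces, cf.\ \cite[\S 8.3]{SU17book}. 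Applying it with $s=r+4$ gives $\|A(T)-A(S)\|_{H^{r+4}_{l,u}}\le C|T-S|^\alpha$. Here $A(S)$ serves as the initial datum, and the nonlinear term is handled using that $H^{r+4+2\alpha}_{l,u}$ is an algebra, with $N(A)$ bounded in it uniformly on $[0,T_0]$.

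Since $G$ involves at most the derivatives of $A$ that appear in the residual estimate of Lemma \ref{lem51}, namely up to order $4$ in $X$ beyond $r$, multilinearity together with the algebra property of $H^{r}_{l,u}$ for $r\ge1$ transfers the Hölder bound from $A$ to $G$. The same applies to $A_3=-\tfrac{81}{64}A^3$. Combining the sup bound and the seminorm bound gives $\|\Res(\varepsilon\Psi)\|_{\cZ^0}\le C_{res}\varepsilon^{7/2}$.
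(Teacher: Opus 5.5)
Your proposal is correct and takes the same route as the paper. The paper's proof consists of observing that the hypothesis $A\in C([0,T_0],H^{r+4+2\alpha}_{l,u})$, combined with the parabolic structure of \eqref{GL}, gives $A\in C^{\alpha}([0,T_0],H^{r+4}_{l,u})$, and then repeating the residual estimate behind Lemma \ref{lem51} for time differences. You additionally supply the details the paper leaves implicit: the estimate for $(e^{h\mathcal{L}}-I)$, the transfer of the H\"older bound to the polynomial coefficients $G$, and the slow-time scaling, which even gains a factor $\varepsilon^{2\alpha}$.
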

}\end{remark}
\noindent
{\bf Proof.} Observe that the assumption on $ A $ imply that $A \in C^{\alpha}([0, T_0], H^{r + 4}_{l, u})$, thus, the proof follows almost line for line the one given in \cite[Section 10.2]{SU17book} for 
the classical Swift-Hohenberg equation. 
\qed
\medskip

Note that $\Lambda$ generates an analytic semigroup on $\cX$. There exists $C > 0$ and $h > 0$, independent of $0 < \varepsilon \ll 1$, such that 
\begin{equation}
  \label{sg_estimate}
  \|e^{t\Lambda}\|_{\cX \to \cX} \leq C e^{h\varepsilon^2 t},
\end{equation}
cf.  \cite[Lemma 10.2.8]{SU17book}.
For the linear terms we use the following optimal regularity lemma which is an adaptation of 
\cite[Theorem 4.3.1]{Lunardibook1}. For the formulation 
of the lemma
we need an additional space.
\begin{definition}
For $ 0 < \alpha < 1 $ the space $ D_{\Lambda} (\alpha,\infty) $ is 
defined in terms of the semigroup $ e^{t \Lambda} $, as the set of all $ u \in \cX $
such that $ t^{1-\alpha} \| \Lambda e^{t \Lambda} u \|_{\cX} $
is bounded near $ t = 0 $. It will be equipped with the norm 
$$ 
\| u \|_{D_{\Lambda} (\alpha,\infty)} = \sup_{t \in [0,1]} t^{1-\alpha} \| \Lambda e^{t \Lambda} u \|_{\cX} .
$$
\end{definition}
\begin{lemma} \label{lem23}
For all $ \alpha \in (0,1) $, $ r \geq 0 $, $ T_0 > 0 $,  there exists a   
$ C > 0 $ such that for all $ \varepsilon \in (0,1) $ the following holds.
Let
$ f \in \cZ^0 $,
$ R_0  \in D(\Lambda)$, and
let $ R $ be the mild solution of  
\begin{equation} \label{eq401}
\partial_t R = \Lambda R + f  = - (1+\partial_x^2)^2 R + \epsilon^2 R + f 
\end{equation}
with $ R|_{t = 0} = R_0 $.
Then we have that the solution $ R = \mathcal{K} f \in \cZ^1 $ satisfies
\begin{eqnarray*}
\| R \|_{\cZ^1} 
 & \leq & C ( (\widetilde{\eta}  - h)^{-1}\varepsilon^{-2} + 1) 
  (\| f \|_{\cZ^0} + \| R_0 \|_{D(\Lambda)}+
\|\Lambda R_0 + f |_{t=0}\|_{D_{\Lambda} (\alpha,\infty)})
\end{eqnarray*}
for all $ \widetilde{\eta} >  h $.
\end{lemma}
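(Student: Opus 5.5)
The plan is to reduce Lemma \ref{lem23} to the classical optimal H\"older regularity result \cite[Theorem 4.3.1]{Lunardibook1}, applied on time intervals of length one. The exponential weight $e^{-\widetilde\eta\varepsilon^2 t}$ will be used to sum the contributions from the long interval $[0,T_0/\varepsilon^2]$. We write $\Lambda = \Lambda_0 + \varepsilon^2$ with $\Lambda_0 = -(1+\partial_x^2)^2$. The operator $\Lambda_0$ is sectorial on $\cX = H^r_{l,u}$ and has $\varepsilon$-independent domain $D(\Lambda)=H^{r+4}_{l,u}$ with equivalent graph norms. All constants in Lunardi's theorem on a fixed interval $[0,2]$ can therefore be taken uniform in $\varepsilon\in(0,1)$. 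The same holds for the norm of $D_\Lambda(\alpha,\infty)$, which differs from that of $D_{\Lambda_0}(\alpha,\infty)$ only by a bounded perturbation.

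\textbf{Step 1 (weighted sup bound).} From the mild formulation $R(t)=e^{t\Lambda}R_0+\int_0^t e^{(t-s)\Lambda}f(s)\,ds$ and the bound \eqref{sg_estimate}, we get
$e^{-\widetilde\eta\varepsilon^2 t}\|R(t)\|_{\cX}\le C\|R_0\|_{\cX} + C\|f\|_{\cZ^0}\int_0^t e^{-(\widetilde\eta-h)\varepsilon^2(t-s)}ds$.
The integral is at most $(\widetilde\eta-h)^{-1}\varepsilon^{-2}$. This step produces the prefactor $(\widetilde\eta-h)^{-1}\varepsilon^{-2}+1$.

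\textbf{Step 2 (local optimal regularity).} Fix $t_1\in[0,T_0/\varepsilon^2-1]$ and consider the restarted problem on $[t_1,t_1+2]\cap[0,T_0/\varepsilon^2]$. For $t_1=0$ the data are $R_0$ with compatibility term $\Lambda R_0+f(0)\in D_\Lambda(\alpha,\infty)$, and Lunardi's theorem applies directly. For $t_1>0$ we do not control the initial datum $R(t_1)$ in $D(\Lambda)$. Instead we use the interior version of Lunardi's estimate: for $t\in[t_1+1,t_1+2]$ the $C^\alpha(D(\Lambda))\cap C^{1+\alpha}(\cX)$ norm is bounded by $C(\sup_{[t_1,t_1+2]}\|R\|_{\cX}+\|f\|_{C^\alpha([t_1,t_1+2],\cX)})$. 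This follows from Lunardi's Theorem 4.3.1 applied to $\varphi(t)R(t)$ with a smooth cutoff $\varphi$ vanishing near $t_1$. The cut-off function satisfies $\partial_t(\varphi R)=\Lambda(\varphi R)+\varphi f+\varphi' R$ with zero data. The extra term $\varphi' R$ is only in $C(\cX)$, not $C^\alpha(\cX)$. We handle it by bootstrapping: first bound $\varphi'R$ in $C^{\alpha}([t_1,t_1+2],\cX)$ via the smoothing estimate $\|\Lambda e^{t\Lambda}\|\le Ct^{-1}$ applied with a second, wider cutoff. Alternatively, we use Lunardi's estimate for $C(\cX)$ data into $C^{\theta}(D_\Lambda(\theta,\infty))$, which embeds into $C^\alpha(\cX)$ on bounded intervals. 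Multiplying by $e^{-\widetilde\eta\varepsilon^2 t}$, which varies by a bounded factor over an interval of length $2$, converts the local bounds into weighted ones.

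\textbf{Step 3 (gluing).} The weighted $C^\alpha$ seminorm over $[0,T_0/\varepsilon^2]$ reduces to differences with $|t-s|\le1$, since for $|t-s|\ge1$ the sup bound of Step 1 suffices. Each such pair lies in one of the intervals of Step 2. Taking the supremum over $t_1$ and inserting Step 1 yields the claimed estimate.

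The main obstacle is Step 2. One must make sure that Lunardi's constants are independent of both $\varepsilon$ and the starting time $t_1$, and handle the restarted problems for $t_1>0$ without requiring $R(t_1)\in D(\Lambda)$ together with a compatibility condition. The cutoff and bootstrap argument above is what I would try first. The translation invariance in time of the autonomous problem gives the uniformity in $t_1$ automatically.
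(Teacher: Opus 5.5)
Your proof is correct in substance, but it takes a different route from the paper. The paper never localizes in time. Instead it splits $R$ with the Fourier mode filters into $R_{c,\pm 1}$ and $R_\sigma$:
\begin{itemize}
\item For the critical parts, compact Fourier support puts $E_{c,\pm1}f$ in every $H^q_{l,u}$. The variation-of-constants formula with \eqref{sg_estimate}, plus a bootstrap through the equation, then suffices. This is the only place where $(\widetilde\eta-h)^{-1}\varepsilon^{-2}$ enters.
\item For $R_\sigma$ the semigroup decays at an $\varepsilon$-independent rate $\beta_-$. This lets the paper run Lunardi's argument (the terms $s_1,\dots,s_6$) on the whole interval $[0,T_0/\varepsilon^2]$ with $\varepsilon$-independent constants.
\end{itemize}
You instead take the $\varepsilon^{-2}$ loss from a global weighted sup bound on all of $R$ in $\cX$. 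You then recover optimal regularity from Lunardi's theorem on windows of length two, with uniformity coming from time-translation invariance and an interior cut-off estimate.

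Your route needs no spectral splitting, only uniform sectoriality and \eqref{sg_estimate}, so it transfers to any setting where these hold. The paper's route costs the mode filters, but it yields the separate bounds of Lemmas \ref{lemestimateEcpart} and \ref{lemestimateErpart}, where the stable part carries no $\varepsilon^{-2}$. That is exactly what the B\'enard problem requires, since there Lemma \ref{lem23} cannot be used because of the quadratic terms. Your scheme would give this too if Step 1 used the $\beta_-$-damped bound on the $E_\sigma$-part.

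Two details need tightening:
\begin{itemize}
\item For $|t-s|\ge 1$ you need sup bounds on $\|R\|_{D(\Lambda)}$ and $\|\partial_t R\|_{\cX}$. These come from Step 2 combined with Step 1, not from Step 1 alone.
\item Passing from unweighted window bounds to the weighted norm costs factors like $e^{2\widetilde\eta\varepsilon^2}$ and $\widetilde\eta\varepsilon^2$. These are uniform in $\widetilde\eta$ only when $\widetilde\eta\varepsilon^2\lesssim 1$. Since $C$ must not depend on $\widetilde\eta$ (it is chosen large afterwards), restrict to $\varepsilon^2\le\widetilde\eta^{-1}$. This is harmless in the fixed-point argument, because $\widetilde\eta$ is fixed before $\varepsilon$ is taken small.
\end{itemize}
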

\noindent
\textbf{Proof.} 
See Section \ref{sec2b}.
\qed
\medskip


 The error estimates will follow with a fixed point argument. From 
\eqref{linearSH} we obtain
\begin{eqnarray} \label{erreq1inva}
R & = & \mathcal{K}( f(R)).
\end{eqnarray}
We prove that the mapping $ R \mapsto \mathcal{K}( f(R)) $
is a contraction in a ball in the space 
$
\mathcal{Z}^1 $. 
We choose $ R_0 = 0 $
so that 
$$ 
\Lambda R_0 + f |_{t=0} = \varepsilon^{-\beta} \Res(\varepsilon \Psi)|_{t=0}.
$$
Therefore, by standard analytic semigroup theory combined with
the multiplier lemma \cite[Lemma 8.3.7]{SU17book} in $ H^r_{l,u}$-spaces, the term 
\begin{eqnarray*}
\lefteqn{\|\varepsilon^{-\beta} \Res(\varepsilon \Psi)|_{t = 0}\|_{D_{\Lambda} (\alpha,\infty)}}
\\ & = & \sup_{t \in [0,1]}
t^{1-\alpha} \| \Lambda e^{t \Lambda} (\varepsilon^{-\beta} \Res(\varepsilon \Psi)|_{t = 0}) \|_{H^r_{l,u}}
\\ & \leq & \sup_{t \in [0,1]} t^{1-\alpha} \| \Lambda e^{t \Lambda} \|_{H^{r+4 \alpha}_{l, u} \to H^{r}_{l, u}} \|\varepsilon^{-\beta} \Res(\varepsilon \Psi)|_{t = 0}) \|_{H^{r+4 \alpha}_{l,u}} 
\\ & \leq & C \|\varepsilon^{-\beta} \Res(\varepsilon \Psi)|_{t = 0}) \|_{H^{r+4 \alpha}_{l,u}} 
\end{eqnarray*}
is $ \mathcal{O}(1) $-bounded w.r.t. $ \varepsilon $ for $ t \to 0 $ if 
$ \Res(\varepsilon \Psi)|_{t = 0} \in H^{r+ 4 \alpha}_{l,u} $ is $ \mathcal{O}(\varepsilon^{7/2}) $-bounded 
w.r.t. $ \varepsilon $.


Hence, we can estimate
\begin{eqnarray*}  
 \|\mathcal{K}( f(R))  \|_{\mathcal{Z}^1} 
 & \leq & C( \varepsilon^2 + (\widetilde{\eta} - h)^{-1}) \left(C_1 \| R \|_{\mathcal{Z}^1}  + C_2 \varepsilon^{\beta-1} e^{\widetilde{\eta} T_0} \| R \|_{\mathcal{Z}^1}^2 \right.\\
 && \left. + C_3 \varepsilon^{2(\beta-1)} e^{2\widetilde{\eta} T_0}\| R \|_{\mathcal{Z}^1}^3
+ C_{res}\right).
\end{eqnarray*}
Therefore, the right hand side of \eqref{erreq1inva} maps a ball of 
$ \mathcal{Z}^1 $ with fixed, but sufficiently large 
radius $ \rho_0 > 0 $ in itself when $ \widetilde{\eta}  > h$ is chosen sufficiently large to control the first term and last term, and then finally $ \varepsilon > 0 $ is chosen sufficiently 
small to control the first, second and third term.
With the same argument the right hand side of \eqref{erreq1inva} can be shown to be  a contraction in this ball of radius $ \rho_0 $.
Hence, there exists a unique fixed point in this ball for this mapping.  \qed

\section{Inverting the linearization}
\label{sec2b}

This section contains the proof of Lemma \ref{lem23}.
To estimate the solutions of 
\begin{equation}  \label{eq401ta}
\partial_t R = \Lambda R + f  ,
\end{equation}
we separate the critical modes from the stable ones. To accomplish this, 
we introduce some mode filters $ E_{c,\pm 1} = \cF^{-1} \widehat{E}_{c,\pm 1} \cF$ 
by the multiplication operators $ \widehat{E}_{c,\pm 1} \in C_0^{\infty} $
defined by
$$ 
\widehat{E}_{c,\pm 1}(k) = \left\{ \begin{array}{cl} 1, & k   \in  [\pm k_c- k_c/20,\pm k_c + k_c/20] , \\
0 ,&  k \not   \in  (\pm k_c- k_c/10,\pm k_c + k_c/10) , \\
\in [0,1] , & \textrm{elsewhere},
\end{array} \right.
$$
in Fourier space. 
Moreover, we define 
$$ 
E_{\sigma} = I - E_{c,1} - E_{c,-1}.
$$ 
We apply these mode filters to \eqref{eq401ta} and find
\begin{eqnarray}  \label{eq401t1a}
\partial_t R_{c,\pm 1} & = & \Lambda R_{c,\pm 1} + E_{c,\pm 1} f , \\ 
\partial_t R_{\sigma} & = & \Lambda R_{\sigma} + E_{\sigma} f .
 \label{eq401ts}
\end{eqnarray}
The  estimates for the $ R_{c,\pm 1} $-parts follow by classical 
semigroup theory for semilinear systems due to the 
compact support in Fourier space of the multipliers $ \widehat{E}_{c,\pm 1} $.
We obtain
\begin{lemma}
\label{lemestimateEcpart}
For all  $ \alpha \in (0,1) $, $ r \geq 0 $, $ T_0 > 0 $,  there exists a   
$ C > 0 $ such that for all $ \varepsilon \in (0,1) $ the following holds.
Let
$ f \in \cZ^0 $,
$ R_0  \in D(\Lambda) $, and
let $ R_{c,\pm 1}  $ be the mild solution of  
\begin{equation} \label{eq401}
\partial_t R_{c,\pm 1} = \Lambda R_{c,\pm 1} + E_{c,\pm 1} f  = - (1+\partial_x^2)^2 R_{c,\pm 1} + \varepsilon^2 R_{c,\pm 1}  +  {E}_{c,\pm 1} f 
\end{equation}
with $ R_{c,\pm 1} |_{t = 0} = {E}_{c,\pm 1} R_0 $.
Then we have that the solution $ R_{c,\pm 1} = \mathcal{K} E_{c,\pm 1} f \in \cZ^1$ satisfies
\begin{eqnarray*}
\| R_{c,\pm 1}  \|_{\cZ^1} 
 & \leq & C ( (\widetilde{\eta} - h)^{-1} \varepsilon^{-2} + 1)
 (\| E_{c,\pm 1} f \|_{\cZ^0} + \| R_0 \|_{D(\Lambda)})
\end{eqnarray*}
for all $ \widetilde{\eta} > h $.
\end{lemma}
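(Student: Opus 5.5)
The proof exploits that on the range of $E_{c,\pm 1}$ the operator $\Lambda$ is bounded, so the parabolic optimal-regularity machinery is not needed. Fix a second cutoff $\widehat{E}^{(2)}_{c,\pm 1}\in C_0^\infty$ with $\widehat{E}^{(2)}_{c,\pm 1}=1$ on $\operatorname{supp}\widehat{E}_{c,\pm 1}$, supported in $(\pm 1 - 1/5,\pm 1+1/5)$. Then $R_{c,\pm 1}=E^{(2)}_{c,\pm 1}R_{c,\pm 1}$, and $\Lambda E^{(2)}_{c,\pm 1}$ is a Fourier multiplier with smooth, compactly supported symbol $\lambda(k,\varepsilon^2)\widehat{E}^{(2)}_{c,\pm 1}(k)$. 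By the multiplier lemma \cite[Lemma 8.3.7]{SU17book} it is bounded from $H^r_{l,u}$ to $H^{s}_{l,u}$ for every $s$, uniformly in $\varepsilon\in(0,1)$. In particular, on the range of $E_{c,\pm 1}$ the norms of $\cX$ and $D(\Lambda)$ are equivalent, and $\|R_0\|_{D(\Lambda)}$ controls $\|E_{c,\pm1}R_0\|_{\cX}$.

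\textbf{Step 1: weighted sup bound.} Write $R_{c,\pm 1}(t)=e^{t\Lambda}E_{c,\pm1}R_0+\int_0^t e^{(t-s)\Lambda}E_{c,\pm 1}f(s)\,ds$. Using \eqref{sg_estimate}, the bound $\|f(s)\|_{\cX}\le e^{\widetilde\eta\varepsilon^2 s}\|f\|_{\cZ^0}$, and multiplying by $e^{-\widetilde\eta\varepsilon^2 t}$, the integral term is bounded by
\[
C\int_0^t e^{-(\widetilde\eta-h)\varepsilon^2(t-s)}\,ds\,\|E_{c,\pm1}f\|_{\cZ^0}\le C(\widetilde\eta-h)^{-1}\varepsilon^{-2}\|E_{c,\pm1}f\|_{\cZ^0},
\]
and the initial-data term by $C\|R_0\|_{D(\Lambda)}$. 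By the norm equivalence above, the same bound holds in $D(\Lambda)$.

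\textbf{Step 2: time derivative.} From the equation, $\partial_t R_{c,\pm1}=\Lambda E^{(2)}_{c,\pm1}R_{c,\pm1}+E_{c,\pm1}f$, so
\[
\sup_t\|e^{-\eta t}\partial_tR_{c,\pm1}(t)\|_{\cX}\le C\bigl((\widetilde\eta-h)^{-1}\varepsilon^{-2}+1\bigr)\bigl(\|E_{c,\pm1}f\|_{\cZ^0}+\|R_0\|_{D(\Lambda)}\bigr).
\]

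\textbf{Step 3: Hölder seminorms.} For $R_{c,\pm1}$ itself in $C^\alpha([0,t_0],D(\Lambda))$, use that a bounded time derivative gives Lipschitz, hence locally $\alpha$-Hölder, continuity. For $|t-s|\le 1$ one has $\|R(t)-R(s)\|\le |t-s|\sup\|\partial_t R\|\le|t-s|^\alpha\sup\|\partial_tR\|$; for $|t-s|>1$ the seminorm is bounded by twice the sup norm. The weight $e^{-\eta t}$ costs only a factor $e^{\eta}\le e^{\widetilde\eta}$ on unit intervals, which is harmless. For $\partial_t R_{c,\pm1}$ in $C^\alpha([0,t_0],\cX)$, the piece $\Lambda E^{(2)}_{c,\pm1}R_{c,\pm1}$ is Hölder by the previous argument, since $\Lambda E^{(2)}_{c,\pm1}$ is bounded, and the piece $E_{c,\pm1}f$ inherits its Hölder seminorm directly from $\|f\|_{\cZ^0}$ because $E_{c,\pm1}$ is bounded on $\cX$.

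The main obstacle is handling the exponential weight inside the Hölder seminorm consistently with the somewhat informal definition of the $\cZ^j$-norms. I would interpret $\|e^{-\eta\cdot}R\|_{C^\alpha}$ as the $C^\alpha$ norm of the product $t\mapsto e^{-\eta t}R(t)$. Since $\frac{d}{dt}e^{-\eta t}=\mathcal{O}(\varepsilon^2)$, the product rule only adds terms bounded by the sup norms already obtained, so the constants remain uniform in $\varepsilon$. All estimates are then collected into the stated inequality.
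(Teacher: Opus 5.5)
Your proposal is correct and follows essentially the paper's route: the compact Fourier support of $\widehat{E}_{c,\pm 1}$ makes the critical part effectively semilinear, variation of constants with \eqref{sg_estimate} gives the sup bound with the factor $(\widetilde{\eta}-h)^{-1}\varepsilon^{-2}$, the equation then controls $\partial_t R_{c,\pm 1}$, and ``$C^1$ implies $C^\alpha$'' closes the estimate (the paper expresses the smoothing as $E_{c,\pm 1}$ mapping into every $H^q_{l,u}$ and bootstraps $H^q \to H^{q-4} \to H^{q-8}$, whereas you use a second cutoff to make $\Lambda E^{(2)}_{c,\pm 1}$ bounded). One caution: the weight only produces factors $\eta = \widetilde{\eta}\varepsilon^2$ and $e^{\widetilde{\eta}\varepsilon^2}$, which are $\mathcal{O}(1)$ in the regime $\widetilde{\eta}\varepsilon^2 \lesssim 1$ that the paper also tacitly uses, so do not bound them by $e^{\widetilde{\eta}}$; a constant growing in $\widetilde{\eta}$ would break the later fixed-point argument, where $C(\widetilde{\eta}-h)^{-1}$ must be made small by taking $\widetilde{\eta}$ large.
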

The proof will be given below. 
For the $ R_{\sigma} $-part we use optimal regularity estimates and the fact 
that linear semigroup is exponentially damped. 
\begin{lemma}
\label{lemestimateErpart}
For all  $ \alpha \in (0,1) $, $ r \geq 0 $, $ T_0 > 0 $, there exists a   
$ C > 0 $ such that for all $ \varepsilon \in (0,1) $ the following holds.
Let
$ f \in \cZ^0 $,
$ R_0  \in D(\Lambda) $, and
let $ R_{\sigma} $ be the mild solution of  
\begin{equation} \label{eq401}
\partial_t R_{\sigma} = \Lambda R_{\sigma} + E_{\sigma} f  = - (1+\partial_x^2)^2 R_{\sigma} + \varepsilon^2 R_{\sigma}  + E_{\sigma} f 
\end{equation}
with $ R_{\sigma}|_{t = 0} = E_{\sigma} R_0 $.
Then we have that the solution $ R_{\sigma} = \mathcal{K} E_{\sigma} f \in \cZ^1 $ satisfies
\begin{eqnarray*}
\| R_{\sigma} \|_{\cZ^1} 
 & \leq & C  
 (\| E_{\sigma}f \|_{\cZ^0} + \| R_0 \|_{D(\Lambda)}+
\|\Lambda R_0 + f |_{t=0}\|_{D_{\Lambda} (\alpha,\infty)})
\end{eqnarray*}
for all $ \widetilde{\eta}  > 0 $.
\end{lemma}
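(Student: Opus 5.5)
The plan is to reduce Lemma \ref{lemestimateErpart} to Lunardi's optimal H\"older regularity theorem \cite[Theorem 4.3.1]{Lunardibook1}, applied to the restriction of $\Lambda$ to the stable subspace. The point is that this restriction generates an exponentially \emph{decaying} analytic semigroup, uniformly in $\varepsilon$. That uniform decay is why no factor $\varepsilon^{-2}$ appears, and why the estimate holds for every $\widetilde{\eta}>0$.

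\textbf{Step 1: uniform decay on the stable subspace.} Set $\cX_\sigma = E_\sigma \cX$. Since $E_\sigma$ is a Fourier multiplier with smooth bounded symbol, it is bounded on $H^r_{l,u}$ by the multiplier lemma \cite[Lemma 8.3.7]{SU17book}, and it commutes with $\Lambda$ and $e^{t\Lambda}$. On $\mathrm{supp}\,(1-\widehat{E}_{c,1}-\widehat{E}_{c,-1})$ we have $|k\mp 1|\ge 1/20$, so
\[
\lambda(k,\varepsilon^2) = -(1-k^2)^2+\varepsilon^2 \le -\sigma_0 - c(1+k^4)
\]
for some $\sigma_0,c>0$ independent of $\varepsilon\in(0,1)$; here $\varepsilon$ may need to be small, or $\sigma_0$ adjusted, which is harmless. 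Using the multiplier lemma in $H^r_{l,u}$ as in \cite[Lemma 10.2.8]{SU17book}, with weighted $L^1$ bounds on the inverse Fourier transforms of $e^{t\lambda}\widehat{E}_\sigma$ and $\lambda e^{t\lambda}\widehat{E}_\sigma$, I would obtain
\[
\|e^{t\Lambda}E_\sigma\|_{\cX\to\cX}+t\|\Lambda e^{t\Lambda}E_\sigma\|_{\cX\to\cX}\le Ce^{-\sigma_0 t}, \qquad t>0,
\]
and also show that $\Lambda$ is sectorial on $\cX_\sigma$ with $0$ in its resolvent set, the constants being uniform in $\varepsilon$. I expect this to be the main obstacle. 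The spaces $H^r_{l,u}$ are not Hilbert, so Plancherel is unavailable and everything has to go through kernel estimates. Moreover, the sector and the spectral gap must be uniform in $\varepsilon$, which requires tracking the symbol's derivatives carefully.

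\textbf{Step 2: Lunardi's theorem with constants uniform in the horizon.} Lunardi's Theorem 4.3.1 gives $C^\alpha([0,t_0],D(\Lambda))\cap C^{1+\alpha}([0,t_0],\cX)$ bounds for the mild solution $R_\sigma(t)=e^{t\Lambda}E_\sigma R_0+\int_0^t e^{(t-s)\Lambda}E_\sigma f(s)\,ds$. These bounds are controlled by $\|f\|_{C^\alpha}$, $\|R_0\|_{D(\Lambda)}$ and $\|\Lambda R_0+f(0)\|_{D_\Lambda(\alpha,\infty)}$, and the compatibility term is the only place where the interpolation space enters. In general the constant depends on $t_0$, and here $t_0=T_0/\varepsilon^2$ is large. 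I will therefore redo the three standard estimates, tracking constants:
\begin{enumerate}[label=(\roman*)]
\item the $C^\alpha$-seminorm of $\Lambda\int_0^t e^{(t-s)\Lambda}(f(s)-f(t))\,ds$;
\item the term $(e^{t\Lambda}-I)f(t)$;
\item the initial-data part.
\end{enumerate}
In each, the decay $e^{-\sigma_0 t}$ replaces the $t_0$-dependence: integrals of the form $\int_0^\infty (t-s)^{\alpha-1}e^{-\sigma_0(t-s)}\,ds$ are finite. This yields constants independent of $t_0$ and of $\varepsilon$. The time derivative is then recovered from $\partial_tR_\sigma=\Lambda R_\sigma+E_\sigma f$.

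\textbf{Step 3: inserting the weight.} Write $\widetilde R=e^{-\eta t}R_\sigma$ with $\eta=\widetilde{\eta}\varepsilon^2$. It solves $\partial_t\widetilde R=(\Lambda-\eta)\widetilde R+e^{-\eta t}E_\sigma f$, and $\Lambda-\eta$ on $\cX_\sigma$ is still uniformly sectorial and decaying for any $\eta\ge 0$. Its $D(\Lambda)$ graph norm is equivalent to that of $\Lambda$ uniformly for $\eta\le\widetilde\eta$, since $\eta\to0$ as $\varepsilon\to0$. The equivalence of the $D_\Lambda(\alpha,\infty)$ norms is likewise uniform: on $t\in[0,1]$ the factor $e^{-\eta t}$ is harmless, and $E_\sigma$ is bounded on this space because it commutes with the semigroup. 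Applying Step 2 to $\widetilde R$ then gives the claimed bound, with $\|e^{-\eta t}E_\sigma f\|_{C^\alpha}\le C\|E_\sigma f\|_{\cZ^0}$ read directly from the weighted norm. The constant is independent of $\widetilde{\eta}>0$ and of $\varepsilon$.
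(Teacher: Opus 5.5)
Your proposal takes essentially the paper's route. The paper passes to $\tR_\sigma=e^{-\eta t}R_\sigma$ and uses the $\varepsilon$-uniform exponential damping \eqref{bolognadecay} on the $E_\sigma$-part, which it asserts and you propose to justify by kernel estimates; it bounds the sup-norms via Henry's rewriting of $\tLambda\tR_\sigma$, reruns Lunardi's Theorem 4.3.1 estimates (its terms $s_1$--$s_6$) with the decay removing the dependence on $t_0=T_0/\varepsilon^2$, and recovers $\partial_t\tR_\sigma$ from the equation. When you write out Step 2, do not estimate $(e^{t\tLambda}-I)f(t)$ and the initial-data term separately; group them as $e^{t\tLambda}(\tLambda R_0+f(0))+e^{t\tLambda}(f(t)-f(0))-f(t)$, as the paper does, because only the sum $\Lambda R_0+f(0)$, not $\Lambda R_0$ or $f(0)$ individually, is assumed to lie in $D_\Lambda(\alpha,\infty)$.
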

Obviously Lemma \ref{lemestimateEcpart} and Lemma \ref{lemestimateErpart} 
imply the validity of Lemma \ref{lem23}.
\medskip

Before we give the proofs of Lemma \ref{lemestimateEcpart} and Lemma \ref{lemestimateErpart} we remark that in case the nonlinearity contains quadratic terms Lemma \ref{lem23} no longer can be used.
In this case we have to work directly with  
Lemma \ref{lemestimateEcpart} and Lemma \ref{lemestimateErpart}.
See the subsequent sections.
\medskip

In the proofs of Lemma \ref{lemestimateEcpart} and Lemma \ref{lemestimateErpart} 
we use the following notation.
We introduce  $ \tR(t) = R(t)  e^{- \eta t} $
and find 
\begin{equation}  \label{eq401t}
\partial_t \tR = \tLambda \tR + \tf  
\end{equation}
with 
\begin{eqnarray*}
\tLambda & = & \Lambda - \eta I ,\\
\tf(\cdot,t) & = & f(\cdot,t) e^{- \eta t}.
\end{eqnarray*}
Applying the mode filters to \eqref{eq401t} and find
\begin{eqnarray}  \label{eq401t1}
\partial_t \tR_{c,\pm 1} & = & \tLambda \tR_{c,\pm 1} + E_{c,\pm 1} \tf , \\ 
\partial_t \tR_{\sigma} & = & \tLambda \tR_{\sigma} + E_{\sigma} \tf .
 \label{eq401ts}
\end{eqnarray}

We start with the more involved 

{\bf Proof of Lemma \ref{lemestimateErpart}.}
We recall the abbreviations $ \cX = H^r_{l,u} $ and 
$ D(\Lambda) = H^{r+\mu}_{l,u} $ with $ \mu = 4 $. 
Moreover we set $ \cX^{\rho} = H^{r+ \mu \rho}_{l,u} $.

For the subsequent estimates we use that the semigroup restricted 
to the $ E_{\sigma} $-part is damped with an exponential rate $ - \beta_- $ 
for a $  \beta_-  > 0 $ independent of the small perturbation parameter 
$ 0 < \varepsilon^2 \ll 1 $, i.e.
\begin{equation} \label{bolognadecay}
\| e^{t \tLambda}  \tR_{\sigma}  \|_{\cX^{\rho}} \leq C t^{-\rho} e^{- \beta_- t} \|  \tR_{\sigma}  \|_{\cX}
\end{equation}
for all $ r, \rho \geq 0 $.
First of all, we have to estimate the quantities 
\begin{equation} \label{bologna}
\sup_{t \in [0,T_0/\varepsilon^2]} \|\tR_{\sigma}(\cdot,t) \|_{\cX} 
\qquad 
\textrm{and}
\qquad 
\sup_{t \in [0,T_0/\varepsilon^2]} \|\tR_{\sigma}(\cdot,t) \|_{D(\Lambda)}.
\end{equation}
Moreover, we have to estimate   the quantities 
\begin{equation} \label{bologna2a}
\|\partial_t(\tR_{\sigma}(\cdot,t)- \tR_{\sigma}(\cdot,s)) \|_{\cX}  
\end{equation}
and
\begin{equation} \label{bologna2b}
\|\tR_{\sigma}(\cdot,t)- \tR_{\sigma}(\cdot,s) \|_{D(\Lambda)} 
\end{equation}
in terms of 
$  \| f \|_{C^{\alpha}(\cX)}  (t-s)^{\alpha} $. By \eqref{eq401ts} the estimate for \eqref{bologna2a}
will be a direct consequence of the estimate for \eqref{bologna2b}.
Hence, it is sufficient to estimate \eqref{bologna} and \eqref{bologna2b}.

For estimating the quantities  \eqref{bologna} we consider the variation of constant formula 
$$ 
\tR_{\sigma}(t) = e^{t \tLambda} R_0 + \int_0^t e^{(t-s) \tLambda} E_{\sigma} \tf(s) ds.
$$
With  \eqref{bolognadecay} we obtain 
\begin{eqnarray*}
\lefteqn{
\sup_{t \in [0,T_0/\varepsilon^2]} \|\tR_{\sigma}(\cdot,t) \|_{\cX} } \\  & \leq & 
C   \|\tR_{\sigma}(\cdot,0) \|_{\cX}  
+ \sup_{t \in [0,T_0/\varepsilon^2]} C \int_0^t e^{-\beta_-(t-s) } \|\tf(s)\|_{\cX}  ds 
\\  & \leq & C  \|\tR_{\sigma}(\cdot,0) \|_{\cX}   +  \int_0^{T_0/\varepsilon^2} e^{-\beta_-(t-s) } ds 
\sup_{t \in [0,T_0/\varepsilon^2]} \|\tf(t)\|_{\cX} 
\\  & \leq & C  \|\tR_{\sigma}(\cdot,0) \|_{\cX}   + \frac{C}{\beta_-} 
\sup_{t \in [0,T_0/\varepsilon^2]} \|\tf(t)\|_{\cX} .
\end{eqnarray*}
We follow \cite[Lemma 3.2.1]{He81} and write 
$$ 
\tLambda \tR_{\sigma}(t) = \tLambda e^{t \tLambda} R_0 + \int_0^t \tLambda e^{(t-s) \tLambda} E_{\sigma} (\tf(s) - \tf(t)) ds + (I- e^{t \tLambda})E_{\sigma}  \tf(t) .
$$
We obtain 
\begin{eqnarray*}
\lefteqn{
\sup_{t \in [0,T_0/\varepsilon^2]} \|\tLambda \tR_{\sigma}(\cdot,t) \|_{\cX} } \\  & \leq & 
C   \|\tLambda \tR_{\sigma}(\cdot,0) \|_{\cX}   + 
 \sup_{t \in [0,T_0/\varepsilon^2]} C \int_0^t e^{-\beta_-(t-s) } (t-s)^{\alpha-1} ds \| f \|_{C^{\alpha}(\cX)} 
 \\ && 
 + \sup_{t \in [0,T_0/\varepsilon^2]} \| (I- e^{t \tLambda}) E_{\sigma} \tf(t) \|_{\cX} 
 \\  & \leq & 
C   \|\tLambda \tR_{\sigma}(\cdot,0) \|_{\cX}   + 
C \int_0^{T_0/\varepsilon^2} e^{-\beta_-(t-s) } (t-s)^{\alpha-1} ds \| f \|_{C^{\alpha}(\cX)} 
 \\ && 
 + C \sup_{t \in [0,T_0/\varepsilon^2]} \|  E_{\sigma} \tf(t) \|_{\cX} 
  \\  & \leq & C   \|\tLambda \tR_{\sigma}(\cdot,0) \|_{\cX}  + C(\alpha,\beta_-) \| f \|_{C^{\alpha}(\cX)} ,
\end{eqnarray*}
with $ C(\alpha,\beta_-) = \mathcal{O}(1) $ for $ \varepsilon \to 0 $. 
\medskip

We come to the remaining estimate, namely that  
of the  quantity  \eqref{bologna2b}. This part is an adaptation of 
\cite[Theorem 4.3.1]{Lunardibook1}.
Let $ \tR_{\sigma} = R_1 + R_2 $ be the mild solution of 
\eqref{eq401ts}, where $ R_1 $ and $ R_2 $ satisfy
\begin{eqnarray*}
R_1(t) & = & \int_0^t e^{(t-s) \tLambda} E_{\sigma} (\tf(s)-\tf(t)) ds, \\
R_2(t) & = & e^{t \tLambda} R_0 + \int_0^t e^{(t-s) \tLambda} E_{\sigma} \tf(t) ds.
\end{eqnarray*}
By applying $ \tLambda $ on these expressions, followed by 
an
explicit integration in the second line,  we find that
\begin{eqnarray*}
\tLambda R_1(t) & = & \int_0^t \tLambda e^{(t-s) \tLambda} E_{\sigma} (\tf(s)-\tf(t)) ds, \\
\tLambda R_2(t) & = & \tLambda e^{t \tLambda} R_0 +  (e^{t \tLambda}-1) E_{\sigma} \tf(t) .
\end{eqnarray*}
With some abuse of notation we write $ f $ instead of $ E_{\sigma} \tf $ in the following. 
\medskip

{\bf a)} To show that $ \tLambda R_1 $ is H\"older continuous in  $ [0,t_0] $ 
we consider 
\begin{eqnarray*}
\lefteqn{\tLambda R_1(t) - \tLambda R_1(s)}  \\
& = & \int_0^t \tLambda e^{(t-\sigma) \tLambda} (f(\sigma)-f(t)) d\sigma
- \int_0^s \tLambda e^{(s-\sigma) \tLambda} (f(\sigma)-f(s)) d\sigma
\\
& = & s_1 + s_2 + s_3 ,
\end{eqnarray*}
for $ 0 \leq s \leq t \leq t_0 $, where 
\begin{eqnarray*}
s_1 & = &
\int_0^s \tLambda (e^{(t-\sigma) \tLambda} - e^{(s-\sigma) \tLambda})(f(\sigma)-f(s)) d\sigma ,\\  s_2 & = &  (e^{t \tLambda}-e^{(t-s) \tLambda})(f(s)-f(t)) ,\\
s_3 & = &  \int_s^t \tLambda e^{(t-\sigma) \tLambda} (f(\sigma)-f(t)) d\sigma.
\end{eqnarray*}
We need an  estimate of the form 
$$ 
\| \tLambda R_1(t) - \tLambda R_1(s) \|_{\cX}  \leq \| s_1 \|_{\cX} + \| s_2 \|_{\cX} + \| s_3 \|_{\cX} ,
$$
where the $ \| s_j \|_{\cX}  $ for $ j = 1,2,3 $ have to be estimated by  $ C(t-s)^{\alpha} $.
\medskip

{\bf i)} We start with $ s_3 $. Using
$ \| \tLambda e^{(t - \sigma)\tLambda } \|_{{\cX}  \to {\cX} } \leq M_1
\remo{e^{-\beta_-(t - \sigma)}}
 (t - \sigma)^{-1}$, 
for a constant $ M_1 > 0 $,
and 
\begin{equation} \label{Lip}
 \| f(\sigma)-f(t) \|_{\cX}  \leq  \| f \|_{C^{\alpha}(\cX)}  (t-\sigma)^{\alpha} 
\end{equation} 
yields 
$$ 
\| s_3 \|_{\cX}   \leq M_1 \int_s^t e^{-\beta_-(t - \sigma)}(t-\sigma)^{\alpha-1} d\sigma \| f \|_{C^{\alpha}(\cX)}
\leq \frac{M_1}{\alpha} (t-s)^{\alpha}
\| f \|_{C^{\alpha}(\cX)} .
$$

{\bf ii)} Next we consider $ s_2 $. Using $ \| e^{t \tLambda} \|_{{\cX}  \to {\cX} } +  \| e^{(t-s) \tLambda} \|_{{\cX}  \to {\cX} }\leq 2 M_0 $, 
for a constant $ M_0 > 0 $,
and \eqref{Lip}
immediately yields the estimate
$$ 
\| s_2 \|_{\cX}  \leq 2 M_0 (t-s)^{\alpha} \| f \|_{C^{\alpha}(\cX)} .
$$

{\bf iii)} Finally we consider $ s_1 $.
Using 
$ \| \tLambda^2 e^{\tau \tLambda } \|_{{\cX}  \to {\cX} } \leq M_2 \tau^{-2}$,
for a constant $ M_2 > 0 $,
and \eqref{Lip}
yields the estimate
\begin{eqnarray*}
\| s_1 \|_{\cX}  & \leq &  
\int_0^s \| \tLambda (e^{(t-\sigma) \tLambda} - e^{(s-\sigma) \tLambda})\|_{L(\cX)}
\|f(\sigma)-f(s)\|_{\cX}  d\sigma
\\ & \leq &
 \int_0^s 
\| \tLambda^2 \int_{s-\sigma}^{t-\sigma} e^{\tau \tLambda} d\tau \|_{L(\cX)} \| f \|_{C^{\alpha}(\cX)}  (s-\sigma)^{\alpha} d\sigma
\\ & \leq & M_2 \int_0^s (s-\sigma)^{\alpha}
\int_{s-\sigma}^{t - \sigma} \tau^{-2} d\tau d\sigma\| f \|_{C^{\alpha}(\cX)} 
\\ & \leq &
  C M_2 \int_0^s  \int_{s-\sigma}^{t - \sigma} \tau^{\alpha-2} d\tau d \sigma  \| f \|_{C^{\alpha}(\cX)} 
\\ & \leq &
 \frac{C M_2}{\alpha(1-\alpha)} (t-s)^{\alpha}
\| f \|_{C^{\alpha}(\cX)} .
\end{eqnarray*}
Therefore, with i), ii), and iii) we have that
$ \tLambda R_1 $ is $ \alpha$-H\"older continuous in $[0,T_0/\varepsilon^2]$.
\medskip

{\bf b)} Next we come to the $ \alpha$-H\"older continuity of  the $ R_2 $-component  which we rewrite into
\begin{eqnarray*}
\tLambda R_2(t) & = &  e^{t \tLambda} (\tLambda R_0 + f(0)) +  e^{t \tLambda} (f(t) - f(0)) - f(t).
\end{eqnarray*}
Therefore, we have 
\begin{eqnarray*}
\lefteqn{\tLambda R_2(t) - \tLambda R_2(s)} \\
 & = &  (e^{t \tLambda} - e^{s \tLambda})(\tLambda R_0 + f(0)) 
\\ &&  +  e^{t \tLambda} (f(t) - f(0)) - f(t) - e^{s \tLambda} (f(s) - f(0)) + f(s)
\\ & = & s_4 + s_5 + s_6 ,
\end{eqnarray*}
with 
\begin{eqnarray*}
s_4  & = & (e^{t \tLambda} - e^{s \tLambda})(\tLambda R_0 + f(0)) , \\ 
s_5 & = & (e^{t \tLambda} - e^{s \tLambda} ) (f(s)-f(0)), \\
s_6 & = & (e^{t \tLambda} - 1)(f(t)-f(s)).
\end{eqnarray*}
As above, we need an estimate of the form 
$$ 
\| \tLambda R_2(t) - \tLambda R_2(s) \|_{\cX}  \leq \| s_4 \|_{\cX} + \| s_5 \|_{\cX} + \| s_6 \|_{\cX} ,
$$
where the $ \| s_j \|_{\cX}  $ for $ j = 4,5,6 $ have to be estimated by  $ C(t-s)^{\alpha} $.
\medskip
\begin{enumerate}[label = \roman*)]
  \item  We estimate
\begin{eqnarray*}
\| s_4 \|_{\cX} 
& \leq & \int_s^t \| \tLambda e^{\sigma \tLambda} \|_{L(D_{\tLambda}(\alpha,\infty),{\cX} )} d\sigma 
\| \tLambda R_0 + f(0) \|_{D_{\tLambda}(\alpha,\infty)} \\
& \leq & M_{1,\alpha}\int_s^t \sigma^{\alpha-1} d\sigma  \| \tLambda R_0 + f(0) \|_{D_{\tLambda}(\alpha,\infty)} \\
& \leq &  \frac{M_{1,\alpha}}{\alpha} \| \tLambda R_0 + f(0) \|_{D_{\tLambda}(\alpha,\infty)}
(t-s)^{\alpha}
\end{eqnarray*}

\item Next we obtain
\begin{eqnarray*}
\| s_5 \|_{\cX} 
& \leq & s^{\alpha} \| \tLambda \int_s^t   e^{\sigma \tLambda} d\sigma \|_{L(\cX)}
\| f \|_{C^{\alpha}(\cX)}\\ 
& \leq & M_1 \int_s^t  s^{\alpha} \sigma^{-1} d\sigma \| f \|_{C^{\alpha}(\cX)}\\ 
& \leq & M_1 s^{\alpha} (\ln t - \ln s)  \| f \|_{C^{\alpha}(\cX)} \\
& = & M_1  s^{\alpha} \ln \left(\frac{t}{s}\right) \| f \|_{C^{\alpha}(\cX)}\\ 
& = & M_1 s^{\alpha} \ln \left(1 + \frac{t-s}{s}\right) \| f \|_{C^{\alpha}(\cX)}.
\end{eqnarray*}
Now, if $t-s \leq s$, we can use the estimate $\ln(1 + y) \leq y$ for $y \geq 0$ to obtain $s^\alpha \ln\left(1 + \frac{t-s}{s}\right) \leq s^{\alpha -1} (t-s) \leq (t-s)^\alpha$. Otherwise if $t-s \geq s$, we use the estimate $\ln(1 + y) \leq C y^{\alpha/2}$ for $y \geq 0$ to obtain $s^\alpha \ln\left(1 + \frac{t-s}{s}\right) \leq s^{\alpha/2} (t-s)^{\alpha/2} \leq (t-s)^\alpha$. In any case we obtain the wanted
\begin{equation}
  \| s_5 \|_{\cX}  \leq CM_1 (t-s)^\alpha \| f \|_{C^{\alpha}(\cX)}.
\end{equation}

\item Moreover, we find
\begin{eqnarray*}
\| s_6 \|_{\cX} 
& \leq &
 (M_0 + 1)(t-s)^{\alpha}\| f \|_{C^{\alpha}(\cX)}
\end{eqnarray*}

\end{enumerate}

Therefore, with i), ii), and iii) we have that
$ \tLambda R_2 $ is $ \alpha$-H\"older continuous in $[0,t_0]$.
\medskip

{\bf c)} The estimates a) and b) finally 
imply
\begin{eqnarray*}
\| R \|_{\cZ^1} 
 \leq C  (\| f \|_{\cZ^0} + \| R_0 \|_{D(\Lambda)}+
\|\tLambda R_0 + f |_{t=0}\|_{D_{\tLambda} (\alpha,\infty)})
\end{eqnarray*}
for all $ \widetilde{\eta}  \geq h $. \qed
\medskip

{\bf Proof of Lemma \ref{lemestimateEcpart}.}
As already said, the  estimates for the $ \tR_{c,\pm 1} $-parts follow from classical 
semigroup theory for semilinear systems due to the 
compact support in Fourier space of the multipliers $ \widehat{E}_{c,\pm 1} $.
This compact support immediately implies 
$$
E_{c,\pm 1} f \in  C^{\alpha}([0,T_0/\varepsilon^2], H^q_{l,u})
$$ 
for any  $q \geq 0 $.
From the variation of constant formula
$$ 
\tR_{c,\pm 1}(t)  =  e^{t \tLambda} \tR_{c,\pm 1}(0) + \int_0^t e^{(t-s) \tLambda} E_{c,\pm 1} \tf(s) ds,
$$ 
recalling the semigroup estimate \eqref{sg_estimate} for $e^{t \Lambda}$, we obtain 
$$
\tR_{c,\pm 1} \in  C^{0}([0,T_0/\varepsilon^2],H^{q}_{l,u}),
$$ 
with the estimate 
\begin{eqnarray*}
\lefteqn{\sup_{t \in [0,T_0/\varepsilon^2]} \| \tR_{c,\pm 1}(t) \|_{H^{q}_{l,u}} }\\
& \leq & C \| \tR_{c,\pm 1}(0) \|_{H^{q}_{l,u}} + C \sup_{t \in [0,T_0/\varepsilon^2]} \left|\int_0^t e^{- (\widetilde{\eta} - h) \varepsilon^{2} (t-s)} \|E_{c,\pm 1} \tf(s) \|_{H^{q}_{l,u}} ds\right|
\\
& \leq & C \| \tR_{c,\pm 1}(0) \|_{H^{q}_{l,u}} + C (\widetilde{\eta} - h)^{-1} \varepsilon^{-2} \sup_{t \in [0,T_0/\varepsilon^2]} \| E_{c,\pm 1} \tf(t) \|_{H^{q}_{l,u}}.
\end{eqnarray*}
This implies $
\tLambda \tR_{c,\pm 1} \in  C^{0}([0,T_0/\varepsilon^2],H^{q-4}_{l,u})
$ with 
 $$
\sup_{t \in [0,T_0/\varepsilon^2]} \| \tLambda \tR_{c,\pm 1}(t) \|_{H^{q-4}_{l,u}} 
\leq C \| \tR_{c,\pm 1}(0) \|_{H^{q}_{l,u}} + C (\widetilde{\eta} - h)^{-1} \varepsilon^{-2} \sup_{t \in [0,T_0/\varepsilon^2]} \| E_{c,\pm 1} \tf(t) \|_{H^{q}_{l,u}}.
$$
From $ \partial_t \tR_{c,\pm 1}  =  \tLambda \tR_{c,\pm 1} + E_{c,\pm 1} \tf $  
we immediately find 
\begin{eqnarray*}
\lefteqn{\sup_{t \in [0,T_0/\varepsilon^2]} \| \partial_t  \tR_{c,\pm 1}(t) \|_{H^{q-4}_{l,u}} }\\
&\leq &C \| \tR_{c,\pm 1}(0) \|_{H^{q}_{l,u}} + C ((\widetilde{\eta} - h)^{-1} \varepsilon^{-2} +1)\sup_{t \in [0,T_0/\varepsilon^2]} \| E_{c,\pm 1} \tf(t) \|_{H^{q}_{l,u}},
\end{eqnarray*}
and so 
$ \tR_{c,\pm 1} \in C^{1}([0,T_0/\varepsilon^2],H^{q-4}_{l,u}) \subset C^{\alpha}([0,T_0/\varepsilon^2],H^{q-4}_{l,u}) $. Since then 
$
\tLambda \tR_{c,\pm 1} \in  C^{\alpha}([0,T_0/\varepsilon^2],H^{q-8}_{l,u})
$ 
with 
\begin{eqnarray*}
\lefteqn{\| \tLambda \tR_{c,\pm 1} \|_{C^{\alpha}([0,T_0/\varepsilon^2],H^{q-8}_{l,u})}}
\\ & \leq & C \| \tR_{c,\pm 1}(0) \|_{H^{q}_{l,u}} 
+ C ((\widetilde{\eta} - h)^{-1} \varepsilon^{-2} +1 ) \| E_{c,\pm 1} \tf \|_{C^{\alpha}([0,T_0/\varepsilon^2],H^{q}_{l,u})}
\end{eqnarray*}
the differential equation implies
$ 
\partial_t \tR_{c,\pm 1} \in  C^{\alpha}([0,T_0/\varepsilon^2],H^{q-8}_{l,u})
$ 
with the same estimates and so 
$$ 
\tR_{c,\pm 1} \in  C^{1+\alpha}([0,t_0],H^{q-8}_{l,u})
$$ 
with the same estimates. 
Since  this holds for all $ q \geq 0 $, we can choose $ q \geq r + 8 $. 
Therefore, we are done. \qed

\section{The quasilinear B\'enard problem}

\label{sec3}
The second example which we will consider is a quasilinear version of B\'enard's problem.
In contrast to the Swift-Hohenberg equation B\'enard's problem contains quadratic terms which forces us to modify the proof of the approximation result like in the semilinear case.
The Ginzburg-Landau approximation for the classical (semilinear) B\'enard problem is  
justified in \cite{Schn94ZAMP}.
In the following we consider a simple quasilinear adaption, 
similar to \cite{ZX22},
of this well-studied problem
such that we can concentrate on the handling of the
quasilinear feature and refer to \cite{Schn94ZAMP}
for the derivation of the Ginzburg-Landau equation and for the estimates
of the residual terms. 

The physical set-up of 
B\'enard's problem consists of a fluid contained between two plates, where the
lower plate is heated and the upper plate is cooled.
In detail,
we follow \cite{Schn94ZAMP} and consider B\'enard's problem in a strip,
$ (x,y) \in   \R \times (0,\pi) $,
with temperature field $ T $, pressure field $ p $, and velocity field $ u = (u_1,u_2) $. 
The so-called Boussinesq approximation is used, i.e.,
the density $ \rho $ is considered to
be a constant except in the
buoyancy term which depends in an affine manner  on the temperature, i.e.
$
\rho(T) = c_0 + c_1 T 
$, with $ c_0 $ and $ c_1 $ some constants.
The problem is supplemented with  
 the mean
flux condition $ \int^\pi_0 u_1 d y = 0$ and
the boundary conditions
$$
 \partial_y u_1|_{y=0} = u_2|_{y=0} =  \partial_y u_1|_{y=\pi} = u_2|_{y=\pi} = 0, 
 \quad T|_{y=0}= T_0 , \quad T|_{y=\pi} = T_1 ,
$$
where $ T_0 \geq  T_1 $. 
These boundary conditions allow an explicit stability analysis. 
In contrast to the classical case where 
the heat diffusion coefficient and the viscosity  are constants here,
we allow them to depend on the velocity field $ u $.

The equation for the diffusion and transport of heat is then given 
$$ 
\partial_t T  = \delta_T(u) \Delta T   -  (u\cdot \nabla) T,
$$ 
with $ u $-dependent heat diffusion coefficient $ \delta_T(u)  > 0 $.
The pure heat conduction state, i.e. $ u = 0 $,  depends only on $ y $ 
and satisfies $ \partial_y^2 T = 0 $.
Therefore, like in the classical case the  pure heat conduction 
state has an affine temperature profile, namely 
$$T = T_0 {+} y(T_1{-} T_0)/\pi. $$

We introduce the deviation $ \Theta = T- T_0 {-}
y(T_1{-} T_0)/\pi $ from the linear heat profile, 
satisfying the boundary conditions
$$ 
 \Theta|_{y=0} = \Theta|_{y=\pi} = 0 .
$$ 
Thus, we finally consider the  slightly modified Oberbeck-Boussinesq system
\begin{eqnarray} \label{b1}
 \partial_t u & = & (1+ \mu_{1}(u)) \Delta u - \nabla p -  \rho \Theta\vec{e}_2  -(u\cdot \nabla)u,\\
 \partial_t \Theta & = & (\kappa + \mu_{2}(u))  \Delta \Theta + u_2  -(u\cdot \nabla)\Theta,\\
0 & = & \nabla\cdot u. \label{b3}
\end{eqnarray}
The equations contain two dimensionless parameters, namely the Rayleigh number 
$ \rho = \beta_0 (T_0-T_1)h^3/(\pi^3 \nu^2) $ and $  \kappa = \delta /\nu $,
where $ \delta $ stands for the heat conductivity for $ u = 0 $, $ \nu $ for the viscosity for $ u =0 $, $ \beta_0 $ 
for the buoyancy  parameter, and $ h $ for the physical height of the fluid. 
Hence, we assume that $ \mu_1(0) = \mu_2(0) = 0$.

The pure heat conduction state $ (u,\Theta,p) = (0,0,0) $ is stable if the temperature difference 
$ T_1-T_0 $ between the 
lower and the upper plate is sufficiently small.  
The trivial solution $ (u,\Theta,p) = (0,0,0) $
loses stability and convection sets in
if the  
temperature difference is sufficiently large. 
Close to threshold of instability we derive a Ginzburg-Landau equation 
to describe the bifurcating solutions.


\subsection{Linear stability analysis}

We recall and adapt the derivation of the Ginzburg-Landau approximation
from \cite{Schn94ZAMP}.
The linearization around the pure heat conduction state $ (u,\Theta,p) = (0,0,0) $ is given by
\begin{eqnarray*}
\partial_t u & = &  \Delta u - \nabla p -  \rho \Theta\vec{e}_2,  \\
\partial_t \Theta & = & \kappa \Delta \Theta + u_2, \\
0 & = & \nabla\cdot u,
\end{eqnarray*}
which is solved by 
$$ 
\left(
\begin{array}{c} u_1 \\u_2 \\ \Theta \\ p \end{array}
\right)
(x,y,t) = e^{\lambda_{m,\pm}(k) t} e^{i kx}
\left( \begin{array}{c}
\widehat{u}_{1,m}\cos(my) \\ \widehat{u}_{2,m}\sin(my) \\ \widehat{\Theta}_{m}\sin(my) \\ \widehat{p}_{m}\cos(my) \end{array}\right)
$$
for $ m \in \{ 1,2,3,\ldots \} $. 
The vanishing mean flux condition excludes $ m = 0 $.
We find the curves of eigenvalues
$$ 
2 \lambda_{m,\pm}(k) = -(\kappa +1)s \pm \sqrt{(\kappa - 1)^2s^2+ 4 \rho k^2 s^{-1}},
$$
with $ s = k^2 + m^2 $.
The associated eigenfunctions in the $ (u_1,u_2,\Theta) $-variables 
satisfy $ \nabla\cdot u = 0 $ and 
are denoted by $ 
\varphi_{m,\pm}(k) $.

By fixing $ \kappa $ and varying the control parameter $ \rho $ we find  that the trivial solution is spectrally stable for 
$ \rho < \rho_{c} =27\kappa/4 $. 
For $ \rho_{c} =27\kappa/4 $ we have $  \lambda_{1,+}(\pm k_c) = 0 $
for $ \pm k_c = \pm 1/\sqrt{2} $ and so a Turing instability occurs.
For the description of the bifurcating solutions we derive a Ginzburg-Landau
equation which is the universal amplitude equation which occurs in case of Turing instabilities.
To do so, we introduce the small bifurcation parameter $ 0 < \varepsilon^2 \ll 1 $
by $ \rho = \rho_{c}+ \varepsilon^2 $. Note that $ \rho $ can physically be 
controlled through the temperature difference $ T_1-T_0 $ between the upper and lower plate.
For $  \varepsilon^2 > 0 $ the curve of eigenvalues  
$  \lambda_{1,+}(k) $ is positive 
in an $ \varepsilon $-neighborhood around
$ \pm k_c = \pm 1/\sqrt{2} $. See \cite{Schn94ZAMP} for more details.

\subsection{The time-dependent quasilinear Oberbeck-Bous\-sinesq system}

We recall now how to handle the Navier-Stokes equations as a dynamical system.
To get rid of the equation $ 0  =  \nabla\cdot u $ and of the variable $ p $ which appears 
without time derivative, we interpret $ \nabla p $ as projection $ P $ on the space of functions $ u $
satisfying $ 0  =  \nabla\cdot u $. 
In detail, we introduce the space 
\begin{eqnarray} \label{Xdefben}
\cX & = & \{ (u,\Theta) \in  (L^2_{l,u}(\R \times (0,\pi)))^3 :  
\\ && \qquad 0  =  \nabla\cdot u ,  u_2|_{y=0,\pi}  =  u \cdot n |_{y=0,\pi} = 0,
 \int_0^{\pi} u_1 dy = 0  \}, \nonumber
\end{eqnarray}
where $ n = (0, \pm 1)^T$ is the outer normal at the strip.
We have 
\begin{lemma} \label{proj}
The mapping $ P: f \mapsto u $, defined through the unique solution of 
$$ 
0  =  \nabla\cdot u , \qquad u + \nabla p = f , \qquad u_2|_{y=0,\pi}  = 0 , \qquad \int_0^{\pi} u_1 dy = 0,
$$ 
is a smooth mapping from $ (L^2_{l,u}(\R \times (0,\pi)))^3 $ to $ \cX $ with 
$ \| P f \|_{L^2_{l,u}} \leq \| f \|_{L^2_{l,u}} $.
\end{lemma}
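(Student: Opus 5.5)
We construct $P$ explicitly by separating variables in the strip, and we obtain the bound $\|Pf\|_{L^2_{l,u}}\le\|f\|_{L^2_{l,u}}$ from orthogonality, first in $L^2$ and then in the local uniform setting. Only the velocity component is affected; the $\Theta$-component is left unchanged, so we restrict to $f=(f_1,f_2)$.

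\emph{Step 1 (solving the problem).} Taking the divergence of $u+\nabla p=f$ and using the boundary condition $u_2|_{y=0,\pi}=0$ gives the Neumann problem
$$\Delta p=\nabla\cdot f \ \text{ in } \R\times(0,\pi),\qquad \partial_y p|_{y=0,\pi}=f_2|_{y=0,\pi},$$
understood weakly, i.e.\ $\int\nabla p\cdot\nabla\phi=\int f\cdot\nabla\phi$ for test functions $\phi$. We then set $u=f-\nabla p$. The side condition $\int_0^\pi u_1\,dy=0$ fixes the $y$-independent part of $p$: writing $\bar f_1(x)=\pi^{-1}\int_0^\pi f_1\,dy$, we need $\partial_x\bar p=\bar f_1$, which determines $\bar p$ up to a constant.

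Concretely, we expand in $\cos(my)$ for $p$, $u_1$, $f_1$ and in $\sin(my)$ for $u_2$, $f_2$, and we Fourier transform in $x$. This yields, mode by mode, the $2\times2$ multiplier
$$\widehat{P}_m(k)=I-\frac{1}{k^2+m^2}\begin{pmatrix}k^2 & ikm\\ -ikm & m^2\end{pmatrix}$$
acting on $(\widehat f_{1,m},\widehat f_{2,m})$ for $m\ge1$, together with $\widehat u_{1,0}=0$ for $m=0$. Each $\widehat P_m(k)$ is an orthogonal projection in $\C^2$, so it has norm at most $1$. It follows that $P$ is an orthogonal projection on $L^2(\R\times(0,\pi))$ of norm $1$. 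Uniqueness follows from the same computation with $f=0$. Since $P$ is linear and bounded, smoothness is automatic once boundedness on $L^2_{l,u}$ is shown.

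\emph{Step 2 (from $L^2$ to $L^2_{l,u}$).} This is the main obstacle, because the multiplier is not local in $x$ and has a singularity at $(k,m)=(0,0)$, caused by the mean-flux condition. We first observe that the $m=0$ part, $f_1\mapsto f_1-\bar f_1$ in the first component, acts pointwise in $x$. It is therefore a contraction on $L^2(y,y+1)\times(0,\pi)$ cells, and hence on $L^2_{l,u}$ with constant $1$.

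For $m\ge1$ the symbols are smooth in $k$, with $k^2+m^2\ge1$, so the kernels of $P-I$ decay exponentially in $|x-x'|$. We would then use the weighted-space characterization of $L^2_{l,u}$ from \cite[\S 8.3.1]{SU17book}: $\|g\|_{L^2_{l,u}}$ is equivalent to $\sup_y\|\rho(\cdot-y)g\|_{L^2}$ with $\rho(x)=(1+x^2)^{-1}$ or $e^{-\delta|x|}$. Commuting the weight through the multiplier costs only the commutator $[\rho_y,P]$, which is controlled by $k$-derivatives of the smooth symbols; this is the multiplier lemma \cite[Lemma 8.3.7]{SU17book}. This gives boundedness of $P$ on $L^2_{l,u}$.

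To reach the sharp constant $1$ claimed in the lemma, we would use the exponential weight $e^{-\delta|x-y|}$ with $\delta\to0$. Conjugating by the weight shifts $k\mapsto k+i\delta$, and the conjugated symbols converge to orthogonal projections as $\delta\to0$. Taking $\delta\to0$ therefore gives norm $\le 1+o(1)$ in the weighted norm, and we would then pass to the supremum over translates. If the constant $1$ cannot be recovered exactly in the $l,u$-norm, the fallback is to prove the bound with a constant $C$, which suffices for all later uses in the paper.

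Finally, the range lies in $\cX$: $\nabla\cdot u=0$ holds mode by mode because the symbol annihilates $(ik,m)$, and the boundary condition holds because of the $\sin(my)$ expansion of $u_2$.
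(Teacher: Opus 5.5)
Your Step 1 and the boundedness part of Step 2 are a sound way to supply what the paper only delegates to \cite{Schn94ZAMP}. They yield $\|Pf\|_{L^2_{l,u}}\le C\|f\|_{L^2_{l,u}}$ once you fix two points.

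First, the $L^2_{l,u}$-norm is a supremum of a sum over $m$. So you may not apply a scalar multiplier lemma mode by mode and then add the resulting $l,u$-norms, since $\sum_m\|f_m\|_{l,u}^2$ can diverge while $\|f\|_{l,u}$ is finite. You need a window-wise (or fixed-weight) bound that is uniform in $m$, summed over $m$ before the supremum is taken. Such a bound is available. The entries $\frac{m^2}{k^2+m^2}$ and $\frac{ikm}{k^2+m^2}$ have kernels $\frac{m}{2}e^{-m|x|}$ and $\pm\frac{m}{2}\mathrm{sgn}(x)e^{-m|x|}$, with $L^1$-norm $1$ and decay rate $m\ge 1$. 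This gives $\|P_m g\|_{L^2(W)}^2\le C\sum_{j\in\mathbb{Z}}e^{-|j|}\|g\|_{L^2(W+j)}^2$ for unit intervals $W$, with $C$ independent of $m$.

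Second, uniqueness in $L^2_{l,u}$ is not the $L^2$ computation. You must rule out the homogeneous solutions $u=-\nabla p$ with $p=e^{\pm mx}\cos(my)$, which are excluded by their growth, and $p=cx$, which is excluded only by the flux condition.

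The genuine gap is the passage to the constant $1$. The norms $\sup_y\|e^{-\delta|\cdot-y|}g\|_{L^2}$ are equivalent to the unit-window norm only with constants of order $\delta^{-1/2}$. Hence a bound $1+O(\delta)$ in the weighted norm gives merely $\|Pf\|_{L^2_{l,u}}\le C\delta^{-1/2}\|f\|_{L^2_{l,u}}$, which degenerates as $\delta\to 0$. Also, conjugation by the two-sided weight $e^{-\delta|x|}$ is not the shift $k\mapsto k+i\delta$.

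In fact no argument can give constant $1$ for the unit-window norm used in the paper. Here is a counterexample.
\begin{itemize}
\item Take $f_0=\nabla^\perp\psi$ with $\psi\in C_c^\infty((0.4,0.6)\times(0,\pi))$ and $\|f_0\|_{L^2}=1$, so $Pf_0=f_0$.
\item Take $f_1=(a(x)\cos y,0)$ with $0\le a\in C_c^\infty((1.7,2))$, $a\not\equiv 0$, small. Then $Pf_1=(\phi\cos y,-\phi'\sin y)$ with $\phi=\frac{1}{2}e^{-|\cdot|}*a>0$.
\item No unit window meets both supports, so $\|f_0+f_1\|_{L^2_{l,u}}=1$.
\item On $W=(0.4,1.4)\times(0,\pi)$ the cross term is $\langle f_0,Pf_1\rangle_{L^2}=\langle Pf_0,f_1\rangle_{L^2}=\langle f_0,f_1\rangle_{L^2}=0$. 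Therefore $\|P(f_0+f_1)\|_{L^2(W)}^2=1+\|Pf_1\|_{L^2(W)}^2>1$.
\end{itemize}
So your fallback is the statement that is actually true and should be proved. A constant $C$ is all that is used later, in \eqref{Besti}.
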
 
\noindent
{\bf Proof.} See \cite[\S 4 (A1)]{Schn94ZAMP}.\qed
\medskip

With the help of this projection we rewrite the quasilinear Oberbeck-Boussinesq system as
evolution problem
\begin{equation} \label{OBS}
\partial_t U = \Lambda U + B(U,U)
\end{equation}
where $ U = (u,\Theta)^T $, and
$$ 
\Lambda U = P \left( \begin{array}{c} \Delta u -  \rho \Theta\vec{e}_2 \\ 
\kappa  \Delta \Theta + u_2 \end{array} \right), \qquad 
 B(U,U) = P \left( \begin{array}{c} \mu_{1}(u) \Delta u  -(u\cdot \nabla)u \\
 \mu_{2}(u)  \Delta \Theta -(u\cdot \nabla)\Theta 
  \end{array} \right),
$$
In the following, for notational simplicity,  we assume that $ \mu_1 $ and $ \mu_2 $ depend 
linearly on $ u $ such that $ B $ can be interpreted as symmetric bilinear mapping.
From \cite[\S 4 (A2)]{Schn94ZAMP} it is known that
the domain of definition $ D(\Lambda) $ of $ \Lambda $ is given by 
\begin{eqnarray} \label{Ddefben}
D(\Lambda) & = & \{ (u,\Theta) \in  (H^2_{l,u}(\R \times (0,\pi)))^3 :  
\\ && \qquad 0  =  \nabla\cdot u ,  u_2|_{y=0,\pi}  =  \partial_y u_1|_{y=0,\pi} = \Theta|_{y=0,\pi} = 0,
 \int_0^{\pi} u_1 dy = 0  \} \nonumber
\end{eqnarray}
and that $ \Lambda : D(\Lambda) \to \cX $ is a sectorial operator generating an 
analytic semigroup $ (e^{t \Lambda})_{t \geq 0} $ in $ \cX $ and there exist $h \geq 0$, $C > 0$ independent of $\varepsilon$ such that for all $t > 0$
\begin{equation}
  \|e^{t\Lambda}\|_{\cX \to \cX} \leq Ce^{h\varepsilon^2 t}.
\end{equation}
Since $ H^2_{l,u}(\R \times (0,\pi)) \subset L^{\infty}(\R \times (0,\pi)) $ it is easy to see by Lemma 
\ref{proj} that 
$ B : D(\Lambda)  \times D(\Lambda)  \to \cX $ is a smooth bilinear mapping. In detail,
there exists 
a $ C_B > 0 $ such that   
\begin{equation} \label{Besti}
\| B(U,V) \|_{\cX} \leq C_B \| U \|_{D(\Lambda)}\| V \|_{D(\Lambda)}
\end{equation}
for all $ U,V \in D(\Lambda) $.
Following the approach of Section \ref{sec2} we introduce two  spaces
to get rid of the quasilinear character of the equations, namely 
$$
X^{\alpha}_{\eta,t_0} =  C^{\alpha}([0,t_0],\cX)
$$
and 
$$
Y^{\alpha}_{\eta,t_0} = C^{\alpha}([0,t_0],D(\Lambda)) \cap C^{1+\alpha}([0,t_0],\cX).
$$
They are equipped with the norms
$$
\| R \|_{X^{\alpha}_{\eta,t_0}} = \sup_{t \in [0,t_0]} \| e^{- \eta t} R(t) \|_{C^{\alpha}([0,t_0],\cX)}
$$
and 
$$
\| R \|_{Y^{\alpha}_{\eta,t_0}} = \sup_{t \in [0,t_0]} \| e^{- \eta t} R(t) \|_{C^{\alpha}([0,t_0],D(\Lambda))} + \sup_{t \in [0,t_0]} \| e^{- \eta t} R(t) \|_{C^{1,\alpha}([0,t_0],\cX)}.
$$
As above we choose $ \eta = \widetilde{\eta} \varepsilon^2 $
 with $ \widetilde{\eta}  > 1 $ fixed, sufficiently large, and independent of $ 0 < \varepsilon^2 \ll 1 $.
To use the previous proofs also in this more general situations we 
set 
\begin{equation} \label{Zdefben}
\cZ^0 = X^{\alpha}_{\widetilde{\eta} \varepsilon^2,T_0/\varepsilon^2}  \qquad \textrm{and} \qquad \cZ^1 = Y^{\alpha}_{\widetilde{\eta} \varepsilon^2,T_0/\varepsilon^2}  .
\end{equation}
Since $ e^{\widetilde{\eta} \varepsilon^2 t} = \mathcal{O}(1) $ for 
 $ t \in [0,T_0/\varepsilon^2]$ the estimate \eqref{Besti} transfers into:
There exists 
a $ C_B > 0 $ such that   for all $ \varepsilon \in (0,1) $ we have
\begin{equation} \label{Besti2}
\| B(U,V) \|_{\cZ^0} \leq C_B e^{\widetilde{\eta} T_0}\| U \|_{\cZ^1}\| V \|_{\cZ^1}
\end{equation}
for all $ U,V \in \cZ^1 $.

\subsection{The Ginzburg-Landau approximation}

For the derivation of a Ginzburg-Landau equation we make the ansatz
$$ 
\left(\begin{array}{c} u \\ \Theta \end{array} \right)  = \varepsilon \Psi_{GL}(x,y,t)+\ \mathcal{O}(\varepsilon^2)
= \varepsilon A(X, T) e^{i k_c x}\varphi_{1,+}(k_c) + c.c. + \mathcal{O}(\varepsilon^2), 
$$ 
with $ X = \varepsilon x $ and $ T = \varepsilon^2 t $.
With $ \varphi_{m,\pm}(k) $ we denote the normalized eigenfunction
associated to the eigenvalue $ \lambda_{m,\pm}(k) $.
Inserting this into \eqref{b1}-\eqref{b3} we find  that the complex amplitude $ A $, which modulates the spatially periodic pattern 
$ e^{i  k_c x}\varphi_{1,+}(k_c) $ 
slowly in time and in space, has to satisfy the GL equation
\begin{equation} \label{GLbenard}
\partial_T A = \frac{4 \kappa}{\kappa+1} \partial_X^2 A+\frac{2}{9(\kappa+1)} A - \gamma A|A|^2,
\end{equation}
with a $ \gamma \in \R $ for which in general 
no simple formula exists. In the classical semilinear case, 
\eqref{b1}-\eqref{b3} can be extended periodically into the $ y $-direction.
Then for the calculation of 
the cubic coefficient only finitely many modes, namely $ m \in \{1,2\} $ 
and $ k \in \{-2k_c, -k_c, 0, k_c, 2k_c\} $, play a role and an explicit 
calculation of $ \gamma $ is possible. In our quasilinear situation 
this is possible if the new functions $ \mu_1(u) $ and $ \mu_2(u) $ would be 
even with respect to $ y $. 
Then $ u_1 $ and $ p $ could be expanded in a series in $ \cos(my) $ 
and $ u_2 $ and $ \Theta $  in a series in $ \sin(my) $,
respectively.
%
%

We refer again to \cite{Schn94ZAMP} for a detailed derivation 
of \eqref{GLbenard}.  

\subsection{The mode filters}

We follow the existing Ginzburg-Landau approximation theory
for semilinear systems
and separate the critical modes from the stable ones.
For defining mode filters, doing this task, in the vector-valued case 
we introduce first a smooth cut-off function $ \chi_{\pm} \in C_0^{\infty}$
defined by
$$ 
\widehat{\chi}_{\pm}(k) = \left\{ \begin{array}{cl} 1, & k   \in  [\pm k_c- k_c/20,\pm k_c + k_c/20] , \\
0 ,&  k \not   \in  (\pm k_c- k_c/10,\pm k_c + k_c/10) , \\
\in [0,1] , & \textrm{elsewhere}.
\end{array} \right.
$$
The mode filters on the critical modes are then defined through
$$  
E_{c,\pm 1} = \cF^{-1} \widehat{E}_{c,\pm 1} \cF
$$ 
where 
$$ 
\widehat{E}_{c,\pm 1}(k) = \widehat{\chi}_{\pm}(k) \langle \varphi^*_{1,+}(k),  \cdot \rangle_{L^2(0,\pi)} \varphi_{1,+}(k)
$$ 
with $ \varphi^*_{1,+}(k) $ being the eigenfunction to the adjoint 
operator $ \widehat{\Lambda}^*(k)$ associated to the eigenvalue $\lambda_{1, +}(k)$ and normalized 
with $ \langle \varphi^*_{1,\pm}(k),  \varphi_{1,\pm}(k)\rangle_{L^2(0,\pi)} = 1 $.
We define the mode filters on the stable complement by
$$ 
E_s = P( I - E_{c,+}-E_{c,-}).
$$  

\subsection{Estimates for the residual}

As in \cite{Schn94ZAMP}  the  Ginzburg-Landau  approximation
$ \varepsilon \Psi $, used for the derivation of the 
Ginzburg-Landau  equation, gives a residual 
$$ 
\Res(U) = - \partial_t U + \Lambda U + B(U,U),
$$
which is sufficiently small for our purposes.
\begin{lemma} \label{lem51a}
Let $ A \in C([0,T_0],H^3_{l,u}) $ be a solution of the 
Ginzburg-Landau equation \eqref{GL} satisfying 
\begin{equation} \label{eq19}
\sup_{T \in [0,T_0/\varepsilon^2]} \| A(\cdot,T) \|_{H^3_{l,u}}
\leq C_{GL}.
\end{equation}
Then   there exist $ \varepsilon_0 > 0 $ 
and $ C_2 > 0 $, only depending on $ C_{GL} > 0 $, $ T_0 > 0 $ 
such that for all $ \varepsilon \in (0,\varepsilon_0)
$ we have 
$$
\sup_{t \in [0,T_0/\varepsilon^2]} \| E_c \Res(\varepsilon \Psi) \|_{\mathcal{X}} \leq C_2 \varepsilon^{7/2} 
$$ 
and 
$$
\sup_{t \in [0,T_0/\varepsilon^2]} \| E_s \Res(\varepsilon \Psi) \|_{\mathcal{X}} \leq C_2 \varepsilon^{5/2}.
$$ 
\end{lemma}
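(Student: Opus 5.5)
The plan is to follow the strategy of \cite{Schn94ZAMP}, adapted to the $ L^2_{l,u} $-setting and the quasilinear terms. First we fix the approximation $ \varepsilon \Psi $. It is not only $ \varepsilon\Psi_{GL} $: it also contains the $ \mathcal{O}(\varepsilon^2) $ corrections at wave numbers $ 0 $ and $ \pm 2k_c $ (in $ y $-modes $ m=0,2 $). These are chosen to cancel the quadratic interaction $ \varepsilon^2 B(\Psi_{GL},\Psi_{GL}) $. Further $ \mathcal{O}(\varepsilon^3) $ terms at $ \pm 3k_c $ and at $ \pm k_c $ in stable directions $ m \neq 1 $ or $ \varphi_{1,-} $ remove the non-resonant cubic terms. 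The amplitudes of all correction terms are algebraic expressions in $ A $, $ \bar A $ and slow derivatives, obtained by inverting $ \widehat{\Lambda}(jk_c) $ on the stable eigendirections, which is possible because these eigenvalues stay bounded away from zero. The quasilinear terms $ \mu_j(u)\Delta $ are bilinear by assumption. Hence they only modify the coefficients of these algebraic equations and the value of $ \gamma $, not the structure of the expansion.

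Next we insert $ \varepsilon\Psi $ into $ \Res $ and order the result in powers of $ \varepsilon $. The terms at orders $ \varepsilon, \varepsilon^2 $ and the resonant $ \varepsilon^3 e^{ik_c x}\varphi_{1,+} $ term vanish by construction and by \eqref{GLbenard}. What remains splits into two groups:
\begin{itemize}
\item $ \mathcal{O}(\varepsilon^4) $ terms, formally;
\item $ \mathcal{O}(\varepsilon^3) $ terms, which are either in stable directions or come from the $ X $-dependence of the eigenvector, for example $ \varepsilon^3 \partial_X A\, \partial_k\varphi_{1,+}(k_c) $.
\end{itemize}
Each term has the form $ \varepsilon^j F(\varepsilon x) e^{ilk_c x}\phi(y) $ with $ F $ built from $ A $ and its derivatives up to order three.

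The key scaling fact is that in $ L^2_{l,u} $ one has $ \| F(\varepsilon\cdot)\|_{L^2_{l,u}} \leq \|F\|_{C_b} \leq C\|F\|_{H^1_{l,u}} $. There is no loss of $ \varepsilon^{-1/2} $ as in $ L^2 $. Since $ H^3_{l,u} $ controls $ A $ in $ C^2_b $, each term is bounded by $ C\varepsilon^j $ uniformly on $ [0,T_0/\varepsilon^2] $.

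This gives the $ E_s $-bound directly, and even better than $ \varepsilon^{5/2} $ for most terms. The stated $ \varepsilon^{5/2} $ leaves room for terms whose coefficients involve $ \partial_X^3 A $, which is only in $ L^2_{l,u} $ and not in $ C_b $. For those we use $ \| F(\varepsilon\cdot)\|_{L^2(y,y+1)} \leq \varepsilon^{-1/2}\|F\|_{L^2(\varepsilon y,\varepsilon y+\varepsilon)} \leq \varepsilon^{-1/2}\|F\|_{L^2_{l,u}} $, so such a term at order $ \varepsilon^3 $ costs exactly $ \varepsilon^{5/2} $. The projection $ P $ and $ E_s $ are bounded on $ L^2_{l,u} $ by Lemma \ref{proj} and the multiplier lemma \cite[Lemma 8.3.7]{SU17book}.

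The main obstacle is the $ E_c $-estimate, which needs one more order. Here we write $ \widehat{E}_{c,\pm1} $ applied to a term $ \varepsilon^3 F(\varepsilon x) e^{ik_c x}\phi(y) $. In Fourier space this is the symbol $ \widehat\chi(k)\langle\varphi^*_{1,+}(k),\phi\rangle\varphi_{1,+}(k) $ evaluated at $ k = k_c + \varepsilon K $. We Taylor expand the symbol around $ k_c $. The leading term vanishes: either $ \langle\varphi^*_{1,+}(k_c),\phi\rangle = 0 $ because $ \phi $ lies in a stable direction, or the term is resonant and was removed by \eqref{GLbenard}. The first-order remainder carries a factor $ \varepsilon K $, that is, one slow derivative. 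This gives $ \varepsilon^4 \times $ (derivatives of $ A $ up to order three), which is bounded by $ \varepsilon^{7/2} $ in $ L^2_{l,u} $ using the same $ \varepsilon^{-1/2} $ loss when $ \partial_X^3 A $ appears. Terms at $ \pm 3k_c $ and $ 0 $ are annihilated exactly by $ \widehat\chi $.

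The delicate points are to justify this Taylor argument for multipliers acting on $ H^s_{l,u} $ functions, for which we would invoke the corresponding lemma in \cite[\S 8.3]{SU17book}, and to verify that the quasilinear terms $ \mu_j(u)\Delta $ produce no additional resonant $ \varepsilon^3 $ contributions beyond those absorbed into $ \gamma $.
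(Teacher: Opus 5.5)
The paper gives no proof of its own here; it defers to \cite{Schn94ZAMP} for the construction of $\varepsilon\Psi$ and the residual bounds. Your outline follows that standard route: an improved approximation, $\varepsilon$-bookkeeping in $L^2_{l,u}$ where only $\partial_X^3A$ costs $\varepsilon^{-1/2}$, and a first-order expansion of the symbol of $E_c$ around $k_c$. However, the construction step has a concrete hole.

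You claim that all $\mathcal{O}(\varepsilon^2)$ terms vanish by construction. Yet the only $\mathcal{O}(\varepsilon^2)$ corrections you include sit at wave numbers $0$ and $\pm 2k_c$, and they are designed to cancel $\varepsilon^2 B(\Psi_{GL},\Psi_{GL})$. The linear part also produces an $\mathcal{O}(\varepsilon^2)$ term at $\pm k_c$. Expanding its symbol at $k=k_c+\varepsilon K$ yields $-i\varepsilon^2\partial_XA\,e^{ik_cx}\,\partial_k\widehat\Lambda(k_c)\varphi_{1,+}(k_c)$. Since $\partial_k\lambda_{1,+}(k_c)=0$, this term lies in the stable part, but it is not zero, because the critical eigenvector genuinely depends on $k$. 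For instance, writing $u$ via a stream function $\widehat\psi e^{ik_cx}\sin y$, the $\Theta$-equation alone leaves $\pm(1-2k_c^2/(1+k_c^2))\,\varepsilon^2\widehat\psi\,\partial_XA\,e^{ik_cx}\sin y\neq 0$. In primitive variables the ansatz is not even divergence free at this order.

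If this term is left in the residual, you only get $E_s\Res(\varepsilon\Psi)=\mathcal{O}(\varepsilon^2)$, and your Taylor step then yields only $E_c\Res(\varepsilon\Psi)=\mathcal{O}(\varepsilon^3)$, so both bounds fail. The fix is to add $-i\varepsilon^2\partial_XA\,\partial_k\varphi_{1,+}(k_c)e^{ik_cx}+c.c.$ to the ansatz, using $\widehat\Lambda(k_c)\partial_k\varphi_{1,+}(k_c)=-\partial_k\widehat\Lambda(k_c)\varphi_{1,+}(k_c)$. Equivalently, build the critical part with the $k$-dependent eigenvector $\varphi_{1,+}(k)$. Your remark on the $X$-dependence of the eigenvector points this way but places it at the wrong order. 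With this term in place, the leftover stable terms at $\pm k_c$ are $\varepsilon^3\partial_X^2A$-terms. The time derivative of the new correction, $\varepsilon^4\partial_T\partial_XA$, contains $\partial_X^3A$ by \eqref{GLbenard}, and this is exactly where $A\in H^3_{l,u}$ and the exponent $7/2$ enter.

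There are also some smaller corrections.

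Terms at $0$, $\pm 2k_c$ and $\pm 3k_c$ are not annihilated exactly by $\widehat\chi$, because $A\in H^3_{l,u}$ has no compactly supported Fourier transform. You omit $\pm 2k_c$, although $\varepsilon^3$ terms such as $\varepsilon^3\partial_X(A^2)e^{2ik_cx}$ remain there. Use the same device as in your Taylor step: write $\widehat\chi(k)=(k-jk_c)m_j(k)$ with $m_j$ smooth and compactly supported, which gains a factor $\varepsilon\partial_X$ and gives $\mathcal{O}(\varepsilon^4)$.

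The $\mathcal{O}(\varepsilon^3)$ corrections at $\pm k_c$ are not needed, since the $E_s$-bound is weak and $E_c$ already gains from the Taylor step. If you used them to remove the linear $\varepsilon^3\partial_X^2A$ stable terms, their time derivative would require $\partial_X^4A$, which $H^3_{l,u}$ does not control.

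Finally, for $\mu_j$ not even in $y$ the corrections are not confined to the $y$-modes $m=0,2$. They are still obtained by inverting $\widehat\Lambda(jk_c)$, so this does not affect the argument.
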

\begin{remark}{\rm
By assuming $A \in C([0, T_0], H^{3 + 2\alpha}_{l, u})$,
as in Remark \ref{remmakre}, we obtain the associated 
estimates for the residual in the $ \cZ^0 $-space.
}
\end{remark}

\subsection{The equations for the error}

We write the solution $ U $ 
of \eqref{OBS} as a sum of the  approximation $ \varepsilon \Psi $ 
and an error $  \varepsilon^2 R $. We split the approximation as
$$ 
\varepsilon \Psi = \varepsilon \Psi_c + \varepsilon^2 \Psi_s,
$$ 
where $\Psi_c = E_c \Psi$ and $\varepsilon \Psi_s = E_s \Psi$.  Thus, the support of $ \Psi_c $ in Fourier space is contained in the support of $ \widehat{E}_c $ 
and that of $ \Psi_s $ is contained in the support of $ \widehat{E}_s $.
Moreover, we split the error $\varepsilon^{3/2} R = V - \varepsilon \Psi$, as
$$ 
\varepsilon^{3/2} R = \varepsilon^{3/2} R_c + \varepsilon^{5/2} R_s,
$$ 
where $ R_c = E_c R $ and $ \varepsilon R_s = E_s R $. Thus, the support of $R_c$ in Fourier space is contained in the support of $ \widehat{E}_c $ 
and that of $ R_s $ is contained in the support of $ \widehat{E}_s $.
We define  $ R_c $ and $ R_s $ to be  solutions of 
\begin{eqnarray} \label{erreq1}
\partial_t R_c & = & \Lambda R_c + L_{2,c}(\varepsilon \Psi,\varepsilon^2 R)
+ N_{2,c}(\varepsilon \Psi,\varepsilon^2 R)  
 + \varepsilon^{-3/2} E_c \Res(\varepsilon \Psi), 
 \\ \label{erreq2}
\partial_t R_s & = & \Lambda R_s + L_{2,s}(\varepsilon \Psi,\varepsilon^2 R)
+ N_{2,s}(\varepsilon \Psi,\varepsilon^2 R)  + \varepsilon^{-5/2} E_s \Res(\varepsilon \Psi), 
\end{eqnarray}
where 
\begin{eqnarray*}
L_{2,c}(\varepsilon \Psi,\varepsilon^2 R) & = & 2 \varepsilon^2  E_c B(\Psi_c, R_s) 
+ 2 \varepsilon^2  E_c B(\Psi_s, R_c)  + 2 \varepsilon^3  E_c B(\Psi_s, R_s) , \\ 
N_{2,c}(\varepsilon \Psi,\varepsilon^2 R) & = & 2 \varepsilon^{5/2}  E_c B(R_c, R_s) + \varepsilon^{7/2}  E_c B(R_s, R_s), \\
L_{2,s}(\varepsilon \Psi,\varepsilon^2 R) & = & 2   E_s B(\Psi_c, R_c) + 2 \varepsilon  E_s B(\Psi_c, R_s)   \\ && \qquad  + 2 \varepsilon  E_s B(\Psi_s, R_c)  + 2 \varepsilon^2  E_s B(\Psi_s, R_s) ,\\
N_{2,s}(\varepsilon \Psi,\varepsilon^2 R) & = &  \varepsilon^{1/2}  E_s B(R_c, R_c) + 2 \varepsilon^{3/2}  E_s B(R_c, R_s) + \varepsilon^{5/2}  E_s B(R_s, R_s).
\end{eqnarray*}
Note that $ E_c B(\Psi_c,R_c) =  E_c B(R_c,R_c) = 0 $ due to disjoint supports 
of $ E_c $ on the one hand and $ B(\Psi_c,R_c)  $ and $ B(R_c,R_c)  $ on the other hand 
in Fourier space.
Using  the fact that $ \varepsilon \Psi $ is uniformly bounded in $ D(\Lambda) $ for
$ t \in [0,T_0/\varepsilon^2] $ and \eqref{Besti2}
we easily find 
\begin{eqnarray} \label{eq36}
 \| L_{2,c}(\varepsilon \Psi,\varepsilon^2 R)
+ N_{2,c}(\varepsilon \Psi,\varepsilon^2 R)   \|_{\cZ^0} 
 & \leq  & C \varepsilon^2 \| R \|_{\cZ^1}  + C  \varepsilon^{5/2} e^{\widetilde{\eta} T_0} \| R \|_{\cZ^1}^2 , \\ \label{eq37}
 \| L_{2,s}(\varepsilon \Psi,\varepsilon^2 R)
+ N_{2,s}(\varepsilon \Psi,\varepsilon^2 R)   \|_{\cZ^0} 
 & \leq &  C  \| R_c \|_{\cZ^1}  + C \varepsilon \| R_s \|_{\cZ^1}  \\ && + C  \varepsilon^{1/2} 
 e^{\widetilde{\eta} T_0} \| R \|_{\cZ^1}^2 . \nonumber
\end{eqnarray}

The counterparts to Lemma \ref{lemestimateEcpart} and Lemma \ref{lemestimateErpart} are as follows:
\begin{lemma}
\label{lemestimateEcpartben}
For all  $ \alpha \in (0,1) $,  $ T_0 > 0 $,  there exists a   
$ C > 0 $ such that for all $ \varepsilon \in (0,1) $ the following holds.
Let
$ f \in \cZ^0 $,
$ R_0  \in D(\Lambda) $, and
let $ R $ be the mild solution of  
\begin{equation} \label{eq401g}
\partial_t R_{c,\pm 1} = \Lambda R_{c,\pm 1} + E_{c,\pm 1} f  
\end{equation}
with $ R|_{t = 0} = {E}_{c,\pm 1} R_0 $.
Then we have that the solution $ R_{c,\pm 1} = \mathcal{K} E_{c,\pm 1} f \in \cZ^1 $ satisfies
\begin{eqnarray*}
\| R_{c,\pm 1} \|_{\cZ^1} 
 & \leq & C ( (\widetilde{\eta} - h)^{-1} \varepsilon^{-2} + 1)
 (\| f \|_{\cZ^0} + \| R_0 \|_{D(\Lambda)})
\end{eqnarray*}
for all $ \widetilde{\eta}  >  h $.
\end{lemma}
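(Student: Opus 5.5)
The plan is to repeat the proof of Lemma \ref{lemestimateEcpart} in the B\'enard setting. The only structural change is that $E_{c,\pm 1}$ now projects onto the one-dimensional eigenspace spanned by $\varphi_{1,+}(k)$. This happens fibre-wise in the Fourier variable and only for $k$ in the compact support of $\widehat{\chi}_{\pm}$.

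First I pass to $\tR = e^{-\eta t}R$, $\tf = e^{-\eta t} f$ and $\tLambda = \Lambda - \eta I$ with $\eta = \widetilde{\eta}\varepsilon^2$, exactly as in Section \ref{sec2b}. Since $E_{c,\pm1}$ commutes with $\Lambda$, the critical part satisfies
\[
\partial_t \tR_{c,\pm 1} = \tLambda \tR_{c,\pm1} + E_{c,\pm1}\tf .
\]
On the range of $E_{c,\pm 1}$ the operator $\Lambda$ acts in Fourier space as multiplication by $\lambda_{1,+}(k)$. The relevant $k$ satisfy $|k \mp k_c| \le k_c/10$, so the symbol is bounded there, uniformly in $\varepsilon\in(0,1)$. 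Moreover, $\lambda_{1,+}(k) \le h\varepsilon^2$ holds there, which is the semigroup bound already recorded for $e^{t\Lambda}$.

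The key auxiliary fact is that $E_{c,\pm1}$ is bounded from $\cX$ into $D(\Lambda)$, and more generally into any $H^q_{l,u}$-based space in $x$ and $y$. This holds because the $y$-profile is the smooth eigenfunction $\varphi_{1,+}(k)$, depending smoothly on $k$ over the compact support. Consequently $E_{c,\pm1}$ is given by convolution in $x$ with a kernel decaying exponentially in $x$, whose $y$-dependence is smooth. Such kernels are bounded on $L^2_{l,u}$-type spaces by the multiplier lemma \cite[Lemma 8.3.7]{SU17book}. The same argument shows that $\Lambda$ restricted to $\mathrm{Ran}\,E_{c,\pm1}$ is a bounded operator, both in $\cX$ and from $\cX$ to $D(\Lambda)$. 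I expect this step to be the main obstacle. It requires checking smooth $k$-dependence of $\varphi_{1,+}$ and $\varphi^*_{1,+}$ near $\pm k_c$, which holds since $\lambda_{1,+}$ is a simple eigenvalue there. It also requires checking compatibility with the projection $P$ and the boundary conditions; these are built into the eigenfunctions, so $E_{c,\pm1}$ maps into $D(\Lambda)$.

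With these facts, the variation of constants formula gives
\[
\sup_t \|\tR_{c,\pm1}(t)\|_{D(\Lambda)} \le C\|R_0\|_{D(\Lambda)} + C(\widetilde{\eta}-h)^{-1}\varepsilon^{-2}\sup_t\|\tf(t)\|_{\cX}.
\]
Here I use $\|e^{t\tLambda}E_{c,\pm1}\| \le Ce^{-(\widetilde{\eta}-h)\varepsilon^2 t}$ and integrate the exponential. By the equation, $\partial_t \tR_{c,\pm1} = \tLambda\tR_{c,\pm1} + E_{c,\pm1}\tf$ is bounded in $D(\Lambda)$ with the same bound plus $C\|\tf\|$. Hence $\tR_{c,\pm1}$ is Lipschitz, and therefore $C^\alpha$, into $D(\Lambda)$. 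The $\widetilde{\eta}\varepsilon^2\tR$ contribution is harmless since $\widetilde{\eta}\varepsilon^2\le\widetilde{\eta}$ is fixed.

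Next I bootstrap as in Section \ref{sec2b}. Since $\tLambda E_{c,\pm1}$ is bounded, $\tLambda\tR_{c,\pm1}\in C^\alpha(\cX)$. Then the equation and $E_{c,\pm1}\tf\in C^\alpha(\cX)$, which holds with norm $\le C\|f\|_{\cZ^0}$, give $\partial_t\tR_{c,\pm1}\in C^\alpha(\cX)$. Collecting the bounds yields the stated estimate with the factor $C((\widetilde{\eta}-h)^{-1}\varepsilon^{-2}+1)$. No $D_\Lambda(\alpha,\infty)$ compatibility term appears, because the critical part has bounded generator.
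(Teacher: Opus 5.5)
Your proposal is correct and follows essentially the paper's route. The paper also reduces everything to the smoothing property of the rank-one, compactly Fourier-supported filter $E_{c,\pm 1}$ (it maps $\cX$ into every $H^q_{l,u}$), and then repeats the proof of Lemma~\ref{lemestimateEcpart}: variation of constants with the decay $e^{-(\widetilde{\eta}-h)\varepsilon^2 t}$ gives the factor $(\widetilde{\eta}-h)^{-1}\varepsilon^{-2}$, and time regularity is bootstrapped through the equation, losing derivatives $H^q \to H^{q-4} \to H^{q-8}$ and compensating by taking $q$ large. You encode the same smoothing as boundedness of $\Lambda$ on $\mathrm{Ran}\,E_{c,\pm 1}$, which spares you that derivative bookkeeping.

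One minor slip: the $x$-kernel of $E_{c,\pm 1}$ is rapidly (Schwartz) decaying, but it cannot decay exponentially. By Paley--Wiener, exponential decay would make its compactly supported symbol analytic in a strip and hence zero. Rapid decay is all the multiplier lemma needs, so your argument is unaffected.
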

\noindent
{\bf Proof.}
Since $ \widehat{E}_{c,\pm 1}(k) $ has a one-dimensional range for fixed $ k \in \R $ 
and a compact support in Fourier space we have that $ E_{c,\pm 1} $ maps 
$ \cX $ in every $ H^q_{l,u} $.
Therefore, 
the proof goes line for line as the proof of Lemma \ref{lemestimateEcpart}.
\qed

\begin{lemma}
\label{lemestimateErpartben}
For all  $ \alpha \in (0,1) $,  $ T_0 > 0 $,  there exists a   
$ C > 0 $ such that for all $ \varepsilon \in (0,1) $ the following holds.
Let
$ f \in \cZ^0 $,
$ R_0  \in D(\Lambda) $, and
let $ R_s $ be the mild solution of  
\begin{equation} \label{eq401gg}
\partial_t R_s = \Lambda R_s + E_s f   
\end{equation}
with $ R_s|_{t = 0} = E_s R_0 $.
Then we have that the solution $ R_s = \mathcal{K} E_s f \in \cZ^1$ satisfies
\begin{eqnarray*}
\| R_s \|_{\cZ^1} 
 & \leq & C  
 (\| f \|_{\cZ^0} + \| R_0 \|_{D(\Lambda)}+
\|\Lambda R_0 + f |_{t=0}\|_{D_{\Lambda} (\alpha,\infty)})
\end{eqnarray*}
for all $ \widetilde{\eta}  > 0 $.
\end{lemma}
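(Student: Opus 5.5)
The plan is to carry over the proof of Lemma \ref{lemestimateErpart} essentially verbatim, after checking that its abstract ingredients hold for the B\'enard operator. Those ingredients are: $\Lambda : D(\Lambda) \to \cX$ is sectorial, and, restricted to the range of $E_s$, it generates an analytic semigroup with an $\varepsilon$-independent exponential decay rate. Concretely, I first introduce $\tR_s = e^{-\eta t} R_s$, $\tLambda = \Lambda - \eta I$, $\tf = e^{-\eta t} f$ with $\eta = \widetilde{\eta}\varepsilon^2$. Then $\tR_s$ solves $\partial_t \tR_s = \tLambda \tR_s + E_s \tf$, and the weighted $\cZ^1$-norm of $R_s$ becomes the unweighted $C^\alpha(D(\Lambda)) \cap C^{1+\alpha}(\cX)$ norm of $\tR_s$.

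The key step, and the main obstacle, is the analogue of \eqref{bolognadecay}: for some $\beta_- > 0$ independent of $\varepsilon$ and $\widetilde{\eta} > 0$,
$$ \| e^{t \tLambda} E_s \|_{\cX \to \cX} + t\, \| \tLambda e^{t \tLambda} E_s \|_{\cX \to \cX} + t^2 \| \tLambda^2 e^{t\tLambda} E_s \|_{\cX\to\cX} \leq C e^{-\beta_- t}. $$
To prove this I argue in Fourier space in $x$. For fixed $k$ the operator $\widehat{\Lambda}(k)$ on $L^2(0,\pi)$ has eigenvalues $\lambda_{m,\pm}(k)$. On the range of $I - \widehat{E}_{c,+}(k) - \widehat{E}_{c,-}(k)$ one has either $|k \mp k_c| \geq k_c/20$, or $\widehat{\chi}_\pm(k)=1$ and the critical eigendirection $\varphi_{1,+}(k)$ is projected away. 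In both cases the remaining spectrum satisfies $\mathrm{Re}\,\lambda \leq -2\beta_-$ uniformly in $k$ for $\varepsilon$ small, since $\lambda_{1,+}$ is the only curve near zero and it is negative away from $\pm k_c$. The transition zone where $\widehat{\chi}_\pm \in (0,1)$ needs care: there $\lambda_{1,+}(k)$ is already $\leq -c < 0$ uniformly in small $\varepsilon$, so the partially retained critical component is also damped. Sectoriality at large $|k|$, $m$ comes from the Laplacian part with the resolvent bounds of \cite[\S 4 (A2)]{Schn94ZAMP}. Transferring these Fourier-multiplier bounds from $L^2$-based to uniformly local spaces $L^2_{l,u}$ is done as in the semilinear theory, via the multiplier lemma \cite[Lemma 8.3.7]{SU17book} with weighted spaces, using that $E_s$ is built from $P$ and smooth compactly supported $k$-multipliers. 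Since $\tLambda = \Lambda - \widetilde{\eta}\varepsilon^2$ with $\widetilde{\eta} > 0$, the shift only improves the decay.

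With this bound available, the remainder follows parts a), b), c) of the proof of Lemma \ref{lemestimateErpart} line for line. The sup-bounds of $\tR_s$ and $\tLambda\tR_s$ come from the variation of constants formula and the rewriting from \cite[Lemma 3.2.1]{He81}, where $\int_0^\infty e^{-\beta_-\tau}\tau^{\alpha-1}d\tau$ is $\mathcal{O}(1)$. For the H\"older estimate of $\tLambda\tR_s$, split $\tR_s = R_1 + R_2$ and estimate $s_1,\dots,s_6$ using only the constants $M_0, M_1, M_2, M_{1,\alpha}$ from the bound above and the interpolation characterisation of $D_{\tLambda}(\alpha,\infty)$. The term $s_4$ produces $\|\Lambda R_0 + f|_{t=0}\|_{D_\Lambda(\alpha,\infty)}$, whose norm is equivalent to the one with $\tLambda$ uniformly in $\varepsilon$ since the shift is $\mathcal{O}(\varepsilon^2)$. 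Finally, the $C^{1+\alpha}(\cX)$ bound follows from $\partial_t \tR_s = \tLambda \tR_s + E_s\tf$ together with $\|E_s\|_{\cX\to\cX}\le C$. Because no $\varepsilon^{-2}$ factor ever arises, the constant holds for all $\widetilde{\eta} > 0$.
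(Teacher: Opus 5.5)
Your proposal is correct and follows the paper's route: the paper's proof simply reruns the proof of Lemma \ref{lemestimateErpart} with the B\'enard spaces \eqref{Xdefben}, \eqref{Ddefben}, \eqref{Zdefben} and $\mu=2$, taking the $\varepsilon$-uniform decay \eqref{bolognadecay} of the semigroup on the $E_s$-part for granted. You additionally justify that decay through the Fourier analysis of $\widehat{\Lambda}(k)$ and the multiplier lemma, and like the paper's version your bound holds only for small $\varepsilon$ (for $\varepsilon$ near $1$ the unstable band around $\pm k_c$ can reach the region where $\widehat{\chi}_\pm<1$), so the remaining range $\varepsilon\in[\varepsilon_1,1)$ has to be covered by standard optimal regularity on the bounded interval $[0,T_0/\varepsilon_1^2]$.
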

\noindent
{\bf Proof.}
The proof goes as the proof of Lemma \ref{lemestimateErpart}, but  
with $ \cX $ defined in \eqref{Xdefben}, $ D(\Lambda) $ defined in \eqref{Ddefben},
$ \cZ^0 $ and $ \cZ^1 $ defined in \eqref{Zdefben}, and $ \mu = 2 $ at the beginning 
of the proof of Lemma \ref{lemestimateErpart}.
\qed
\medskip

The error estimates will follow with a fixed point argument. From 
\eqref{erreq1}-\eqref{erreq2} we obtain
\begin{eqnarray} \label{erreq1inv}
R_c & = & F_c( R_c,R_s),
\\ \label{erreq2inv}
R_s & = &  F_s( R_c,R_s),
\end{eqnarray}
where
\begin{eqnarray*} 
 F_c( R_c,R_s) & = & \mathcal{K}_c( L_{2,c}(\varepsilon \Psi,\varepsilon^2 R)
+ N_{2,c}(\varepsilon \Psi,\varepsilon^2 R)  
+ \varepsilon^{-2} E_c \Res(\varepsilon \Psi)), 
 \nonumber \\
 F_s( R_c,R_s) & = & \mathcal{K}_s ( L_{2,s}(\varepsilon \Psi,\varepsilon^2 R)
+ N_{2,s}(\varepsilon \Psi,\varepsilon^2 R)  + \varepsilon^{-3} E_s \Res(\varepsilon \Psi)).
\end{eqnarray*}
We prove that the mapping $ (R_c,R_s) \mapsto (F_c( R_c,R_s),F_s( R_c,R_s)) $
is a contraction in a ball in the space $ \cZ^1 \times \cZ^1 $ 
equipped with norm
$$ 
  \| R_c \|_{\cZ^1} + 
\beta_0 \| R_s \|_{\cZ^1} 
$$ 
where $ \beta_0 > 0 $ is suitably chosen below to get rid of the Jordan block structure of $ 
(F_c,F_s) $.
With $  Z =  \| R_c \|_{\cZ^1} + 
\beta_0 \| R_s \|_{\cZ^1}  $ we estimate, using  Lemma \ref{lemestimateEcpartben} and Lemma \ref{lemestimateErpartben}, using \eqref{eq36} and \eqref{eq37}, that
\begin{eqnarray*} 
 \| F_c( R_c,R_s)  \|_{\cZ^1} 
 & \leq & C ((\widetilde{\eta}-h)^{-1} + \varepsilon^2) \Big( \| R_c \|_{\cZ^1} + \| R_s \|_{\cZ^1} \\ 
 && + e^{ \widetilde{\eta} T_0} \varepsilon^{1/2} \left(\| R_c \|_{\cZ^1} + \| R_s \|_{\cZ^1}\right)^2 
+ C_{res} \Big)\\
 & \leq & C ((\widetilde{\eta}-h)^{-1} + \varepsilon^2) \left(\beta_0^{-1} Z + e^{2 \widetilde{\eta}} \varepsilon^{1/2} ( \beta_0^{-1} Z)^2 + C_{res}\right)
\end{eqnarray*}
and 
\begin{eqnarray*}  
\beta_0 \| F_s( R_c,R_s)  \|_{\cZ^1} 
 & \leq & C \beta_0 (
 \| R_c \|_{\cZ^1} + \varepsilon \| R_s \|_{\cZ^1}) \\ && \qquad + C e^{\widetilde{\eta} T_0}  \varepsilon^{1/2} (\| R_c \|_{\cZ^1} + \| R_s \|_{\cZ^1})^2 + C C_{res}
\\ &  \leq & 
 C \beta_0 Z + C \varepsilon Z + C e^{\widetilde{\eta} T_0}  \varepsilon^{1/2} Z^2 + C \beta_0 C_{res}. 
\end{eqnarray*}
Therefore, the right hand side of \eqref{erreq1inv}-\eqref{erreq2inv}
maps a ball of $ \cZ^1 \times \cZ^1 $ with fixed, but sufficiently large radius $ \rho_0 $ in itself for 
$ \beta_0> 0 $ chosen sufficiently 
small, then $   \widetilde{\eta}  $ chosen sufficiently large,  and finally $ \varepsilon > 0 $ chosen sufficiently 
small.
With the same argument  the right hand side of \eqref{erreq1inv}-\eqref{erreq2inv} can be 
shown to be a  contraction in this  ball.
Hence, the exists a unique fixed point in this ball for this mapping. 
\medskip

Therefore, we have established the following approximation result.

\begin{theorem}\label{appbenard}
Let $ \alpha \in (0,1)  $ and let  $ A  \in  C([0,T_0],H^{3+2 \alpha}_{l,u}(\R, \C)) $ be a solution of the GL equation 
\eqref{GLbenard} satisfying 
\begin{equation} \label{eq19s2ben}
\sup_{T \in [0,T_0/\varepsilon^2]} \| A(\cdot,T) \|_{H^{3+2\alpha}_{l,u}}
\leq C_{GL}.
\end{equation}
Then there exist $ \varepsilon_0 > 0 $ 
and $ C_2 > 0 $, only depending on 
$ C_{GL} > 0 $, $ T_0 > 0 $,  such that  for all
$ \varepsilon \in (0,\varepsilon_0) $ there are solutions 
$ (u,\Theta) $ of the modified Oberbeck-Boussinesq system
\eqref{b1}-\eqref{b3}
 with
 $$ 
\sup_{t \in [0,T_0/\varepsilon ^2]} \|(u,\Theta)(\cdot,\cdot,t)- \varepsilon \Psi_{GL}(\cdot,\cdot,t)\|_{D(\Lambda)}
 \leq C_2 \varepsilon^{3/2}.
 $$
\end{theorem}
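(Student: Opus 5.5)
The plan is to assemble Theorem \ref{appbenard} from the pieces established above, following the scheme of Section \ref{sec2}.

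\textbf{Residual.} First we fix the approximation $\varepsilon\Psi = \varepsilon\Psi_c + \varepsilon^2\Psi_s$, built from the solution $A$ of \eqref{GLbenard} as in \cite{Schn94ZAMP}. By Lemma \ref{lem51a} and the subsequent remark, the assumption $A\in C([0,T_0],H^{3+2\alpha}_{l,u})$ gives $A\in C^\alpha([0,T_0],H^3_{l,u})$. This yields
\[
\|E_c\Res(\varepsilon\Psi)\|_{\cZ^0}=\mathcal{O}(\varepsilon^{7/2}),\qquad \|E_s\Res(\varepsilon\Psi)\|_{\cZ^0}=\mathcal{O}(\varepsilon^{5/2}).
\]
The factor $e^{-\widetilde\eta\varepsilon^2 t}$ is harmless on $[0,T_0/\varepsilon^2]$. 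Since $\varepsilon^{-1}$ times a time derivative of the slow amplitude carries an extra $\varepsilon^2$, the time H\"older seminorm on the long interval is controlled as well.

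Hence the inhomogeneities $\varepsilon^{-3/2}E_c\Res$ and $\varepsilon^{-5/2}E_s\Res$ in \eqref{erreq1}--\eqref{erreq2} are $\mathcal{O}(\varepsilon^2)$ and $\mathcal{O}(1)$ in $\cZ^0$, respectively. This is what the contraction estimate needs, with $C_{res}$ independent of $\varepsilon$.

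\textbf{Initial data.} We take $R_0=0$, so $u|_{t=0}=\varepsilon\Psi|_{t=0}$. Then the compatibility term in Lemma \ref{lemestimateErpartben} reduces to $\|\varepsilon^{-5/2}E_s\Res(\varepsilon\Psi)|_{t=0}\|_{D_\Lambda(\alpha,\infty)}$. As in Section \ref{sec2}, we bound it by the norm of the residual in the interpolation space $\cX^{\alpha'}$ with $\alpha'>\alpha$, using the analytic semigroup bound $t^{1-\alpha}\|\Lambda e^{t\Lambda}\|_{\cX^{\alpha'}\to\cX}\le C$. This requires the residual at $t=0$ to be $\mathcal{O}(\varepsilon^{5/2})$ in a slightly smoother space, i.e.\ $H^{2\alpha'}$-type regularity in $(x,y)$ compatible with the boundary conditions.

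I expect this to be the main obstacle. The residual has to lie in the domain of a fractional power of the Stokes-B\'enard operator, so its boundary traces must vanish when $2\alpha'>1/2$. I would handle this by restricting to $\alpha<1/4$, so that no boundary conditions enter the interpolation space. Alternatively, the explicit $\sin(my)$/$\cos(my)$ structure of the approximation can be used to check compatibility directly.

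\textbf{Fixed point.} With these ingredients, the contraction argument carried out just before the theorem applies. We work in the norm $Z=\|R_c\|_{\cZ^1}+\beta_0\|R_s\|_{\cZ^1}$, use \eqref{eq36}, \eqref{eq37}, Lemma \ref{lemestimateEcpartben} and Lemma \ref{lemestimateErpartben}, and choose the parameters in this order: first $\beta_0$ small, then $\widetilde\eta>h$ large, then $\varepsilon$ small. This gives a unique fixed point $(R_c,R_s)$ in a ball of radius $\rho_0$ independent of $\varepsilon$. The corresponding $U=\varepsilon\Psi+\varepsilon^{3/2}(R_c+\varepsilon R_s)$ solves \eqref{OBS}, hence \eqref{b1}--\eqref{b3}, since $\cZ^1\subset C^1([0,T_0/\varepsilon^2],\cX)\cap C([0,T_0/\varepsilon^2],D(\Lambda))$ and so solutions are classical.

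\textbf{Conclusion.} Finally, $\sup_t\|R(t)\|_{D(\Lambda)}\le e^{\widetilde\eta T_0}\rho_0$, and $\|\varepsilon^2\Psi_s\|_{D(\Lambda)}=\mathcal{O}(\varepsilon^2)$. Passing from $\varepsilon\Psi$ to $\varepsilon\Psi_{GL}$ costs only $\mathcal{O}(\varepsilon^2)$ in $D(\Lambda)$, because the mode filter acts as the identity on $\Psi_{GL}$ up to terms of order $\varepsilon^{N}$, using the slow modulation and the estimate $\|A\|_{H^3_{l,u}}\le C_{GL}$. Together these give the claimed $\mathcal{O}(\varepsilon^{3/2})$ bound.
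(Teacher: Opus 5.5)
Your proposal is correct and is essentially the paper's own argument. The steps match: residual bounds in $\cZ^0$ via Lemma \ref{lem51a} and the following remark, $R_0=0$, and the contraction for $(R_c,R_s)$ in the norm $\|R_c\|_{\cZ^1}+\beta_0\|R_s\|_{\cZ^1}$ using Lemmas \ref{lemestimateEcpartben}, \ref{lemestimateErpartben} and \eqref{eq36}--\eqref{eq37}, with $\beta_0$, then $\widetilde{\eta}$, then $\varepsilon$ fixed in that order.

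Your handling of the $D_{\Lambda}(\alpha,\infty)$ compatibility term spells out a boundary issue the paper passes over. Lowering the working H\"older exponent below $1/4$ is harmless, since the hypothesis only weakens and the conclusion does not involve $\alpha$. In the last step you should drop the claim that the mode filter is the identity on $\Psi_{GL}$ up to $\mathcal{O}(\varepsilon^N)$: the $k$-dependence of $\varphi_{1,+}(k)$ already gives an $\mathcal{O}(\varepsilon)$ defect. You do not need that claim anyway, because $\varepsilon\Psi-\varepsilon\Psi_{GL}=\mathcal{O}(\varepsilon^2)$ in $D(\Lambda)$ follows directly from the construction of the ansatz.
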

\begin{remark}{\rm
Since $ D(\Lambda) \subset H^2_{l,u}(\R\times (0,\pi),\R^3) $,
Sobolev's embedding theorem $ H^r_{l,u}(\R\times (0,\pi),\R^3) \subset C^0_{b,unif}(\R\times (0,\pi),\R^3)  $ for $ r \geq 2 $ gives 
the estimate
$$ 
\sup_{t \in [0,T_0/\varepsilon^2]} 
\sup_{(x,y) \in \R\times (0,\pi)}
\|(u,\Theta)(x,y,t)- \varepsilon \Psi_{GL}(x,y,t) \|_{\R^3} \leq C_2 \varepsilon^{3/2}.
$$}
\end{remark}

\section{Quasilinear reaction-diffusion systems}

\label{sec4}


The previous approach works for general quasilinear pattern forming
reaction-diffusion-advection systems, too.
We follow \cite{BS26} and reconsider 
\begin{equation}  \label{OS1} 
\partial_t U = D(U) \partial_x^2 U + f(U,\partial_x U)
\end{equation}
with $ t \geq 0 $, $ x \in \R $, $ U(x,t) \in \R^m $, 
$ D(U) = \textrm{diag}(d_1(U),\ldots,d_m(U)) $ with $ d_j:\R^m \to \R $ 
smooth and $ d_j(U) > 0 $ for all $ j = 1,\ldots,m $, and finally 
smooth $ f: \R^m \times \R^m \to \R^m $.

Similar to Section \ref{sec2} we assume 
\medskip

{\bf (ASS2)} There exists a $ t $- and $ x $-independent fixed point $ U^* $. The linearization 
$$
\partial_t V = \Lambda V = D(U^*) \partial_x^2 V + \partial_1 f(U^*,0) V + 
\partial_2 f(U^*,0) \partial_x V
$$ 
at $ U^* $ shows a Turing instability at a wave number $ k_c > 0 $
with a single curve of unstable eigenvalues.  

We refer to \cite[\S 9, \S 10]{SU17book} for the derivation of a Ginzburg-Landau equation,
\begin{equation} \label{GLRD}
\partial_T A =\nu_2 \partial_X^2 A+\nu_0 A - \nu_3 A|A|^2,
\end{equation}
with coefficients $ \nu_j \in \R $, 
with associated Ginzburg-Landau approximation $ \varepsilon \Psi_{GL} $, and estimates 
for the residual terms in $ H^r_{l,u} $-spaces.
We set 
$$ 
\cX = (H^r_{l,u})^m \qquad \textrm{and} \qquad D(\Lambda) =  (H^{r+2}_{l,u})^m.
$$
and have exactly  as above 
\begin{theorem}
Consider \eqref{OS1} and assume that the assumption 
{\bf (ASS2)} is fulfilled.

Let  $ \alpha \in (0,1) $ and let $ A \in C([0,T_0],H^{r+3+2\alpha}_{l,u}) $ be a solution of the 
Ginzburg-Landau equation \eqref{GLRD} satisfying 
\begin{equation} \label{eq19s2ben}
\sup_{T \in [0,T_0/\varepsilon^2]} \| A(\cdot,T) \|_{H^{r+3+2\alpha}_{l,u}}
\leq C_{GL}.
\end{equation}
Then  there exist $ \varepsilon_0 > 0 $ 
and $ C_2 > 0 $, only depending on 
$ C_{GL} > 0 $, $ T_0 > 0 $, $ r \geq 3 $
 such that  for all
$ \varepsilon \in (0,\varepsilon_0) $  there are solutions 
$ U $ of the 
general quasilinear pattern forming
reaction-diffusion-advection system
\eqref{OS1}
such that 
$$ 
\sup_{t \in [0,T_0/\varepsilon^2]} 
\| U(\cdot,t) - (U^*+ \varepsilon \Psi_{GL}(\cdot,t)) \|_{D(\Lambda)} \leq C_2 \varepsilon^{3/2}.
$$
\end{theorem}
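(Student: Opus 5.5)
We will follow the Bénard case of Section \ref{sec3}, since \eqref{OS1} generically contains quadratic terms and Lemma \ref{lem23} cannot be applied directly. First we write $U = U^* + \varepsilon\Psi + \varepsilon^{3/2}R$. Here $\varepsilon\Psi$ is the improved approximation from \cite[\S 9, \S 10]{SU17book}: $\varepsilon\Psi_{GL}$ plus $\mathcal{O}(\varepsilon^2)$ corrections at wave numbers $0, \pm 2k_c$, which are chosen so that the residual $\Res(U) = -\partial_t U + D(U)\partial_x^2 U + f(U,\partial_x U)$ satisfies $\|E_c\Res\|_{\cZ^0}=\mathcal{O}(\varepsilon^{7/2})$ and $\|E_s\Res\|_{\cZ^0} = \mathcal{O}(\varepsilon^{5/2})$. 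The extra time regularity needed for the $\cZ^0$ norms comes from $A\in C([0,T_0],H^{r+3+2\alpha}_{l,u})$, exactly as in Remark \ref{remmakre}. The corrections differ from $\Psi_{GL}$ by $\mathcal{O}(\varepsilon^2)$ in $D(\Lambda)$, so the final estimate can be stated in terms of $\Psi_{GL}$.

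Next we expand around $U^*$ and move the linear part $\Lambda$ of (ASS2) to the left. What remains is:
\begin{itemize}
\item the quasilinear terms $(D(U)-D(U^*))\partial_x^2 U$;
\item the Taylor remainders of $f$.
\end{itemize}
These are smooth maps $D(\Lambda)=(H^{r+2}_{l,u})^m \to \cX=(H^r_{l,u})^m$, because $r\ge 3$ makes $H^{r}_{l,u}$ an algebra containing $\partial_x$ of elements of $D(\Lambda)$. Composition with smooth functions is handled by the multiplier and composition estimates of \cite[\S 8.3]{SU17book}. We apply the mode filters $E_{c}$ (projections onto the critical eigenvector, built as in Section \ref{sec3}) and $E_s$, and scale $\varepsilon^{3/2}R=\varepsilon^{3/2}R_c+\varepsilon^{5/2}R_s$.

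The equations for $R_c$ and $R_s$ then have the same structure as \eqref{erreq1}--\eqref{erreq2}. The key point is that $E_c$ applied to the quadratic interactions of critical modes vanishes, because the Fourier supports of those products lie near $0$ and $\pm 2k_c$. This gives analogues of \eqref{eq36}--\eqref{eq37}: the $R_c$-inhomogeneity is bounded by $C\varepsilon^2\|R\|_{\cZ^1}+C\varepsilon^{5/2}\|R\|^2_{\cZ^1}+\ldots$, and the $R_s$-inhomogeneity by $C\|R_c\|_{\cZ^1}+C\varepsilon\|R_s\|_{\cZ^1}+C\varepsilon^{1/2}\|R\|^2_{\cZ^1}+\ldots$. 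Higher-order terms come with additional powers of $\varepsilon$. The exponential weights cost only factors $e^{k\widetilde\eta T_0}$, as in \eqref{Besti2}.

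Lemma \ref{lemestimateEcpartben} carries over verbatim, since $E_c$ smooths. Lemma \ref{lemestimateErpartben} carries over with $\mu=2$ once we have:
\begin{itemize}
\item $\Lambda$ is sectorial on $(H^r_{l,u})^m$ with domain $(H^{r+2}_{l,u})^m$, since $D(U^*)$ is positive diagonal;
\item $\|e^{t\Lambda}\|\le Ce^{h\varepsilon^2 t}$;
\item the decay \eqref{bolognadecay} on the $E_s$-part, which follows from (ASS2) because all other spectrum lies uniformly in the left half-plane.
\end{itemize}
We take $R_0=0$. The $D_\Lambda(\alpha,\infty)$ term then reduces to $\varepsilon^{-5/2}E_s\Res|_{t=0}$ measured in $H^{r+2\alpha}_{l,u}$, which is $\mathcal{O}(1)$ by the regularity assumption on $A$.

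Finally we run the contraction in $\cZ^1\times\cZ^1$ with the norm $\|R_c\|+\beta_0\|R_s\|$. We choose, in order, $\beta_0$ small, then $\widetilde\eta$ large, then $\varepsilon$ small, exactly as in Section \ref{sec3}. The main obstacle I expect is verifying the quasilinear nonlinear estimate in the Hölder-in-time spaces. Products of $C^\alpha([0,t_0],H^r_{l,u})$ and $C^\alpha([0,t_0],H^{r+2}_{l,u})$ functions must be shown to stay in $C^\alpha(\cX)$, with constants uniform on the long interval $[0,T_0/\varepsilon^2]$. Lipschitz bounds must also hold for the contraction, including the term $(D(U^*+\varepsilon\Psi+\varepsilon^{3/2}R)-D(U^*))\partial_x^2 R$. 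For this I would use the elementary identity $ab(t)-ab(s)=(a(t)-a(s))b(t)+a(s)(b(t)-b(s))$ together with the algebra property of $H^r_{l,u}$. The factor $\varepsilon\Psi$ in front of $\partial_x^2R_s$ contributes only $\mathcal{O}(\varepsilon)$, which is absorbed by the $\varepsilon\|R_s\|$ bound.
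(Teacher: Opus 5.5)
Your proposal is correct and is essentially the paper's argument. The paper proves this theorem only by saying it holds ``exactly as above'', and the intended route is the B\'enard-type one of Section~\ref{sec3} that you reproduce: mode filters, the splitting $\varepsilon^{3/2}R_c+\varepsilon^{5/2}R_s$, Lemmas~\ref{lemestimateEcpartben} and~\ref{lemestimateErpartben} with $\mu=2$, $R_0=0$, and the $\beta_0$-weighted contraction with $\beta_0$, $\widetilde{\eta}$, $\varepsilon$ fixed in that order.

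One small imprecision concerns the improved approximation. For a general system it must also contain an $\mathcal{O}(\varepsilon^2)$ correction at the wave numbers $\pm k_c$ in the non-critical eigendirections. This correction is proportional to $\partial_X A$ and comes from the $\varepsilon^2$-part of $\Lambda$ acting on the slowly modulated critical mode; without it, $E_s\Res(\varepsilon\Psi)$ is only $\mathcal{O}(\varepsilon^2)$. Since you defer to the construction in \cite{SU17book}, and this term is $\mathcal{O}(\varepsilon^2)$ in $D(\Lambda)$, the argument is unaffected.
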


%
\begin{remark}{\rm
Sobolev's embedding theorem $ H^r_{l,u}(\R,\R^m) \subset C^0_{b,unif}(\R,\R^m)  $ for $ r \geq 1 $ gives 
the estimate
$$ 
\sup_{t \in [0,T_0/\varepsilon^2]} 
\sup_{x \in \R}
\| U(x,t) - (U^*+ \varepsilon \Psi_{GL}(x,t)) \|_{\R^m} \leq C_2 \varepsilon^{3/2}.
$$}
\end{remark}
\begin{remark}{\rm
The assumption  that the diffusion 
matrix $ D(U) $ is  diagonal 
can obviously be weakened to the assumption that 
$ \Lambda $ is sectorial in $ \cX = (H^r_{l,u})^m $. }
\end{remark}

\section{Discussion and Outlook}
\label{sec8}

In the previous section we developed an approximation theory for  quasilinear pattern-forming systems in function spaces containing functions which not necessarily vanish for $ |x| \to \infty $.
We  applied this approach to three relevant classes of pattern-forming systems,
namely scalar toy problems, hydrodynamical stability problems 
and reaction-diffusion systems.
We expect that the presented method applies to all systems where a Ginzburg-Landau 
approximation can formally be performed, 
the linearization $ \Lambda $ around the trivial solution is a sectorial operator in some space $ \cX $,
the nonlinear terms smoothly map $ D(\Lambda) $ in $ \cX $, and the critical and stable modes can be separated by mode filters.

Since the attractor of these systems contains  spatially periodic pattern or modulating front solutions, it is essential to handle  function spaces such as 
uniformly local Sobolev spaces 
$ H^r_{l,u} $ or H\"older spaces $ C^{m,\nu} $. Global existence results 
with uniform bounds can only be expected 
in such spaces and not in Sobolev spaves.

We strongly expect that with the presented approach, 
also the second corner stone of this theory, namely the attractivity,
can be transferred to quasilinear systems, too.

Finally, we expect that the presented Ginzburg-Landau theory for 
quasilinear systems can be transferred to all
other instabilities which have been handled in the semilinear case.
These are, for instance, Turing-Hopf instabilities, long wave Hopf instabilities,
Turing or Turing-Hopf 
instabilities in systems with a conservation law or  time-periodic pattern forming systems, see \cite[\S 10]{SU17book} for an overview.

\bibliographystyle{alpha}
\bibliography{GLbib}

\end{document}